\theoremstyle{plain}
\newtheorem{thm}{Theorem}[section]
\newtheorem{lem}[thm]{Lemma}
\newtheorem{cor}[thm]{Corollary}
\theoremstyle{definition}
\newtheorem{defn}[thm]{Definition}
\theoremstyle{remark}
\newtheorem{ex}[thm]{Example}
\newtheorem{rem}[thm]{Remark}
\title{CM--stability of blow--ups and canonical metrics \\ {\normalsize \it dedicated to the memory of prof. G. Bassanelli}}
\author{Alberto Della Vedova \footnote{\noindent {\bf Address.} Dipartimento di Matematica, Universit\`a degli Studi di Parma, Viale G. P. Usberti, 53/A - 43100 Parma (Italy). {\bf e-mail}: alberto.dellavedova@unipr.it}}
\date{}
\begin{document}
\maketitle

\begin{abstract}
\noindent An asymptotic formula for the Tian--Paul CM--line of a flat family blown--up at a flat closed sub-scheme is given. As an application we prove that the blow--up of a polarized manifold along a (relatively) Chow--unstable submanifold admits no (extremal) constant scalar curvature K\"ahler metrics in classes making the exceptional divisors sufficiently small. Moreover a geometric characterization of relatively Chow--unstable configuration of points in the projective space is given. From this we get new examples of classes admitting no extremal K\"ahler metric also in the case of the projective plane blown--up at a finite set of points.

\medskip 

\noindent {\bf Keywords.} CM--stability, CM--polarization, Blow--up, Futaki invariant, constant scalar curvature K\"ahler metric, extremal K\"ahler metric, relative GIT stability.\end{abstract}

\section{Introduction and statement of results}

The problem of finding a canonical metric on a fixed K\"ahler class of a compact manifold has a rich and long history mainly due to Calabi, Aubin, Yau, Tian and Donaldson. First general results on K\"ahler-Einstein metrics are due to Aubin \cite{Aub76} in the negative first Chern class case and Yau \cite{Yau78} in the non--positive case \cite{Yau78} with the celebrated proof of the Calabi conjecture. Since Matsushima theorem \cite{Mat57} and the Futaki invariant \cite{Fut83}, the positive (Fano) case is known to be obstructed and in general the problem is still open. An insight of Yau relates the existence of such a metric on a Fano manifold to some kind of algebraic--geometric stability. In \cite{Tia97} Tian defined K--stability for Fano manifolds, which gives a subtle obstruction to the existence of K\"ahler-Einstein metrics, and made concrete Yau's suggestion introducing CM--polarization for families of Fano manifolds and formulating a precise conjecture relating the GIT stability relative to CM--polarization to the existence of K\"ahler-Einstein metrics. Later Donaldson introduced his K--stability for general polarized manifolds (with non necessarily anti--canonical polarization) and conjectured it is equivalent to the existence of a cscK (constant scalar curvature K\"ahler) metric in the polarization class \cite{Don02}. Finally one step further has been made by Sz\'ekelyhidi \cite{Sze06} introducing relative K-stability for eK (extremal K\"ahler) metrics. Calabi defined such metrics, generalizing cscK metrics, with the aim to give a canonical representative in each K\"ahler class of a compact complex manifold  \cite{Cal82,Cal85}, but after Levine \cite{Lev85} and Burns--De Bartolomeis \cite{BurDeB88} we know that there exist K\"ahler manifolds admitting no extremal metrics in any K\"ahler class. More recently, has been shown examples \cite{Ton98,ApoCalGauTon05} of K\"ahler manifold having extremal metrics only in some classes. The main application of results of this paper is the construction of new examples of manifolds which admit no eK metrics in some classes. 

The equivalence between the existence of a cscK (or more generally eK) metric in the polarization class and the algebraic--geometric stability of a polarized manifold sometimes goes under the name of {\it Yau--Tian--Donaldson conjecture}. Some important steps toward the proof of such conjecture have been done. In particular the stability of a polarized manifold admitting a canonical metric has been proved \cite{Tia97, Don05, Sto08bis, Sze06, Sze08}. Moreover one expects that the stability condition involved in the Yau--Tian--Donaldson conjecture is given by the refined CM--polarization of the Hilbert scheme introduced by Paul--Tian \cite{PauTia06}, which is in fact deeply related to Donaldson's K-stability \cite{Don02}. Actually Fine and Ross showed that the line bundle introduced by Paul and Tian in general is not ample \cite{FinRos06}, thus we will refer to it (or to a rational multiple) as the {\it CM--line}.

The main result of this paper is quite technical, but it has a number of applications to the problem of finding canonical metrics. Roughly speaking it is the expression of the CM--line of a blown-up family in terms of the CM--line of the original family and a kind of Chow--stability of the center of blow--up. To be more precise, let $\pi : X \to B$ be a flat family of relative dimension $n$ endowed with a polarization $L$ ({\it i.e.} a relatively ample line bundle on $X$). Following Paul--Tian construction \cite{PauTia06}, these data define the CM--line  $$ \lambda_{\rm CM}(X/B,L) \in \mathbb {\rm Pic}_{\mathbb Q}(B).$$ Moreover, to any closed sub--scheme $Y \subset X$ which is flat over $B$ (via $\pi$) and of relative dimension $d$ we associate the CW--line (see section \ref{sec::CM&CW}) $$ \lambda_{\rm CW}(Y,X/B,L) \in \mathbb {\rm Pic}_{\mathbb Q}(B).$$ Now consider the blow--up $\beta: \tilde X \to X$ of $X$ with center $Y$ endowed with the polarization $L_r = \beta^*L^r \otimes \mathcal O_{\tilde X}(1)$ for $r$ sufficiently large. The family $\tilde \pi = \pi \circ \beta : \tilde X \to B$ is flat, thus we can consider the CM--line $\lambda_{\rm CM}\left( \tilde X/B, L_r \right) \in {\rm Pic}_{\mathbb Q}(B)$. Our main result is the following 

\begin{thm}\label{thm::main_thm_intro} For $r$ sufficiently large we have
\begin{equation}\label{eq::main_thm_intro} \lambda_{\rm CM}\left( \tilde X/B, L_r\right) = \lambda_{\rm CM} \left( X/B, L\right) \otimes \lambda_{\rm CW} \left( Y,X/B,L \right)^\frac{1}{r^{n-d-1}} \otimes O\left( \frac{1}{r^{n-d}} \right),\end{equation} where $O\left( \frac{1}{r^{n-d}} \right) = \bigotimes_{i\geq n-d} \epsilon_i^\frac{1}{r^i}$ for some fixed $\mathbb Q$-line bundles $\epsilon_i$ on $B$.
\end{thm}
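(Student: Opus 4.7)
The plan is to work with the Deligne--pairing description of the CM-- and CW--lines developed in section \ref{sec::CM&CW}: up to universal numerical coefficients, $\lambda_{\rm CM}(X/B,L)$ is built from pairings of the form $\langle L^{n+1}\rangle_{X/B}$ and $\langle K_{X/B},L^{n}\rangle_{X/B}$. Substituting into the analogous expression for $\lambda_{\rm CM}(\tilde X/B, L_r)$ the decompositions
\begin{equation*}
L_r = r\,\beta^*L - E, \qquad K_{\tilde X/B} = \beta^* K_{X/B} + (n-d-1)\,E,
\end{equation*}
and expanding multilinearly, I obtain a finite sum of Deligne pairings on $\tilde X/B$ whose entries are pullbacks from $X$ and powers of $E$, each tagged with an explicit power of $r$.

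Next I would push every such pairing down to $X/B$ via $\beta$. By the projection formula for Deligne pairings, a pairing of the form $\langle \beta^*M_1,\dots,\beta^*M_{n+1-k},E^k\rangle_{\tilde X/B}$ reduces to a pairing on $X/B$ of $M_1,\dots,M_{n+1-k}$ against the cycle--theoretic pushforward $\beta_*(E^k)$. Since $E\to Y$ is a projective bundle of relative dimension $n-d-1$, one has $\beta_*E^k = 0$ for $k<n-d$, $\beta_* E^{n-d}= \pm[Y]$, and $\beta_* E^{n-d+i}$ for $i\geq 1$ is (up to sign) a Segre class of $N_{Y/X}$ capped with $[Y]$.

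Collecting terms by order in $r$, the contribution with no factor of $E$ reproduces $\lambda_{\rm CM}(X/B,L)$; configurations with fewer than $n-d$ copies of $E$ die under $\beta_*$; and the first surviving correction comes from configurations with exactly $n-d$ copies of $E$. These reduce to Deligne pairings on $Y/B$ of restrictions of $L$ and $K_{X/B}$, and matching coefficients against the definition of $\lambda_{\rm CW}(Y,X/B,L)$ identifies this contribution with $\lambda_{\rm CW}(Y,X/B,L)^{1/r^{n-d-1}}$. The remaining terms, containing $n-d+i$ copies of $E$ with $i\geq 1$, acquire a weight $r^{-(n-d-1)-i}$ and are swept into the error, where each $\epsilon_i$ is a Deligne pairing on the flat family $Y/B$ involving Segre classes of $N_{Y/X}$ and thus lives in ${\rm Pic}_{\mathbb Q}(B)$.

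The main obstacle is bookkeeping: the coefficient of $r^{-(n-d-1)}$ collects contributions from three distinct sources --- the binomial expansion of $\langle L_r^{n+1}\rangle$, the expansion of $\langle K_{\tilde X/B},L_r^{n}\rangle$ through the $E$--term of $L_r$, and through the $(n-d-1)E$ term of $K_{\tilde X/B}$ --- all weighted by the Paul--Tian coefficients, and one must verify that these reassemble precisely into the combination prescribed by the definition of the CW--line. A secondary check, essentially formal once the Deligne--pairing language is set up, is that the $\epsilon_i$ are genuine $\mathbb Q$--line bundles on $B$ depending only on the data $(X/B,L,Y)$, which follows from their description as Deligne pairings on the flat family $Y/B$.
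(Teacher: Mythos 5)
Your proposal takes a genuinely different route from the paper, and it contains gaps that are fatal at the level of generality the theorem requires. The paper's own proof never touches the geometry of the blow--up beyond the single identity $\tilde \pi_! (L_r^k) = \pi_!(\mathcal I_Y^k L^{kr})$: it writes $\det \tilde\pi_!(L_r^k) = \det\pi_!(L^{kr}) \otimes \bigl(\det\pi_!(L^{kr}/\mathcal I_Y^k L^{kr})\bigr)^{-1}$ via the exact sequence $0 \to \mathcal I_Y^k L^{kr} \to L^{kr} \to L^{kr}/\mathcal I_Y^k L^{kr} \to 0$, substitutes the two Knudsen--Mumford--type double expansions \eqref{eq::det_av} and \eqref{eq::det_sub} (and the corresponding rank expansions), collects powers of $k$ and then of $r$, and reads the result off against Definitions \ref{def::CMline} and \ref{def::CWline}. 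This is purely formal and valid for an arbitrary flat family and an arbitrary flat closed subscheme $Y$.

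Your argument, by contrast, relies on $K_{\tilde X/B} = \beta^*K_{X/B} + (n-d-1)E$, on $E \to Y$ being a projective bundle of relative dimension $n-d-1$, and on the vanishing $\beta_*E^k = 0$ for $k < n-d$ with $\beta_*E^{n-d} = \pm[Y]$. All of these require $X$ to be smooth along $Y$ and $Y$ to be regularly embedded of pure codimension $n-d$. The theorem is stated for flat families of projective varieties with possibly very singular fibers and for closed subschemes $Y$ that may be non--reduced --- indeed the non--reduced case $\mathcal I_Y = \mathcal I_{Y_1}^{m_1} \cap \dots \cap \mathcal I_{Y_s}^{m_s}$ is essential for the applications via Lemma \ref{lem::lAC_mor} and Theorems \ref{thm::no_cscK_intro} and \ref{thm::no_eK_intro}. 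For such $Y$ the exceptional divisor is not a projective bundle, the discrepancy formula fails, and the relative canonical need not even be a line bundle on the total space of a test configuration with singular central fiber. A second, independent gap: the paper's $\lambda_{\rm CM}$ and especially $\lambda_{\rm CW}$ are \emph{defined} through the coefficients $\nu_0,\nu_1,\rho_{0,1},b_{0,1}$ of the expansions of $\det\pi_!$ and ${\rm rank}\,\pi_!$, not through Deligne pairings; your "matching coefficients against the definition of the CW--line" step is exactly where the content of the theorem lies, since you would have to prove that your Deligne pairing on $Y/B$ computes the subleading coefficient $\rho_{0,1}$ of the double expansion of $\det\pi_!(L^h/\mathcal I_Y^kL^h)$ in $h$ and $k$, and no such comparison is available in the generality needed. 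If you restrict to smooth $X$ and smooth, regularly embedded, reduced $Y$, your strategy can likely be carried out and would give finer geometric information about the error terms $\epsilon_i$ (as Segre--class pairings on $Y/B$), but as written it does not prove the stated theorem.
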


As we anticipated above, formula \eqref{eq::main_thm_intro} has many applications to the problem of existence of canonical metrics on polarized manifolds. To state them we apply theorem \ref{thm::main_thm_intro} to a test configuration for a polarized manifold $(M,A)$ (as defined in \cite{Don02}). In other words we make the following assumptions:
\begin{itemize} 
\item $B = \mathbb C$ endowed with the natural $\mathbb C^\times$--action,
\item are given a $\mathbb C^\times$--action on $X$ making $\pi$ an equivariant map and a linearization on $L$, 
\item for any fiber $X_t=\pi^{-1}(t)$ over $t \neq 0$ we have $(X_t,L|_{X_t}) \simeq (M,A)$ as polarized manifolds. 
\end{itemize}
From these data we get a polarized scheme endowed with a $\mathbb C^\times$-action, namely the central fiber $X_0=\pi^{-1}(0)$ of the family, polarized with $L|_{X_0}$; with a little abuse of notation we denote by $F(X,L)$ the generalized Futaki invariant of $(X_0,L|_{X_0})$ as defined by Donaldson \cite{Don02} for a general scheme. In addition suppose that 
\begin{itemize}
\item the sub--scheme $Y \subset X$ is invariant,
\item each fiber $Y_t=\pi|_Y^{-1}(t)$ is smooth (although possibly reducible or non--reduced) for $t \neq 0$.
\end {itemize}
From these data both CM and CW--lines inherit a linearization and equation \eqref{eq::main_thm_intro} holds in the sense of linearized bundles. By Paul--Tian \cite[theorem 1]{PauTia06} we know that the weight of the $\mathbb C^\times$--action on the CM--line $\lambda_{\rm CM}\left(X/\mathbb C,L\right)$ over $0 \in \mathbb C$ is nothing but the generalized Futaki invariant $F(X,L)$. Moreover let $w_{\rm CW}\left( Y,X,L \right)$ be the weight of the induced $\mathbb C^\times$--action on the CW--line $\lambda_{\rm CW}\left(Y,X/\mathbb C,L\right)$ on the fiber over $t=0$. 

In remark \ref{rem::CW_weight} we will show that the weight $w_{\rm CW}$ is in fact a generalization of the Chow weight of a projective variety introduced (although not explicitly defined) by Mumford in \cite{Mum77}. We will call $w_{\rm CW}(Y,X,L)$ the {\it Chow weight} of $Y \subset (X,L)$ (or $Y_t \subset (X_t,L|_{X_t})$). Actually $w_{\rm CW}\left(Y,X,L\right)$ depends only on the fibers of $(X,L)$ and $Y$ over $t=0$, but the reader should not confuse $w_{\rm CM}(Y,X,L)$ with the standard Chow weight of the polarized scheme $\left( Y_0,L|_{Y_0}\right)$, being $Y_0 = \pi|_Y^{-1}(0)$.

Following general construction described above, after blowing up the sub-scheme $Y$ we get the flat family $\tilde \pi : \tilde X \to \mathbb C$ polarized with $L_r$. A few comments are in order before going ahead. First, since $Y$ is invariant, the $\mathbb C^\times$--action on $X$ lifts to $\tilde X$ and the map $\tilde \pi$ is equivariant. Second, since $Y$ is transversal to each fiber $X_t$ for $t \neq 0$, the fiber $\tilde X_t = \tilde \pi^{-1}(t)$ is the blow--up of $X_t$ along $Y_t$ for $t \neq 0$. Thus $(\tilde X/\mathbb C, L_r)$ is a test-configuration for the polarized manifold $(\tilde M, \beta^*A^r \otimes \mathcal O(-E))$, where $\beta: \tilde M \to M$ is the blow--up of $M$ along $N=Y_1$ with exceptional divisor $E$. In this situation by theorem \ref{thm::main_thm_intro} we can prove the following 

\begin{cor}\label{cor::fut_exp_intro}
For $r$ sufficiently large we have $$ F\left(\tilde X, L_r\right) = F\left(X,L\right) + \frac{w_{\rm CW} \left(Y,X,L\right)}{r^{n-d-1}} + O\left(\frac{1}{r^{n-d}}\right). $$ 
\end{cor}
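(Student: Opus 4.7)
The plan is to obtain the corollary by taking the $\mathbb{C}^\times$--weight at the central fibre $0 \in \mathbb{C}$ of both sides of the identity \eqref{eq::main_thm_intro} supplied by Theorem \ref{thm::main_thm_intro}. The discussion preceding the statement already observes that, under the hypotheses of the corollary, both the CM-- and the CW--lines carry canonical $\mathbb{C}^\times$--linearizations and that \eqref{eq::main_thm_intro} lifts to an isomorphism of linearized $\mathbb{Q}$--line bundles on $B=\mathbb{C}$; in particular the $\mathbb{Q}$--line bundles $\epsilon_i$ appearing in the error term $O(1/r^{n-d})$ inherit natural linearizations. This equivariant upgrade of Theorem \ref{thm::main_thm_intro} is the only structural input I would need beyond the theorem itself.

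Denoting by $w(\mathcal{L})$ the weight of the induced $\mathbb{C}^\times$--action on the fibre of a linearized $\mathbb{Q}$--line bundle $\mathcal{L}$ at $0$, and recalling that $w$ is additive under tensor product and linear in rational powers, equation \eqref{eq::main_thm_intro} translates into the numerical identity
\[
w\bigl(\lambda_{\rm CM}(\tilde X/\mathbb{C},L_r)\bigr) = w\bigl(\lambda_{\rm CM}(X/\mathbb{C},L)\bigr) + \frac{w\bigl(\lambda_{\rm CW}(Y,X/\mathbb{C},L)\bigr)}{r^{n-d-1}} + \sum_{i\geq n-d} \frac{w(\epsilon_i)}{r^i}.
\]
Paul and Tian's identification of the CM--weight with the generalized Futaki invariant, quoted just before the statement, gives $w\bigl(\lambda_{\rm CM}(\tilde X/\mathbb{C},L_r)\bigr) = F(\tilde X,L_r)$ and $w\bigl(\lambda_{\rm CM}(X/\mathbb{C},L)\bigr) = F(X,L)$. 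The definition $w_{\rm CW}(Y,X,L) := w\bigl(\lambda_{\rm CW}(Y,X/\mathbb{C},L)\bigr)$ handles the middle term, and the remaining series is $O(1/r^{n-d})$ because the weights $w(\epsilon_i)$ are fixed rational numbers independent of $r$ while each term carries a factor $r^{-i}$ with $i \geq n-d$.

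The main obstacle, really already dissolved by the preparatory discussion, lies in promoting \eqref{eq::main_thm_intro} from an identity of $\mathbb{Q}$--line bundles on $B$ to an identity of linearized $\mathbb{Q}$--line bundles; once this is done the corollary reduces to the purely formal weight computation above. I would therefore spend the bulk of any written--out argument verifying that every construction appearing in \eqref{eq::main_thm_intro} (including the auxiliary $\epsilon_i$) admits a natural $\mathbb{C}^\times$--linearization coming from the lifted action on $(\tilde X, L_r)$, and that these linearizations are compatible with the canonical isomorphism provided by Theorem \ref{thm::main_thm_intro}. Everything else is bookkeeping.
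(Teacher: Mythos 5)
Your proposal is correct and follows essentially the same route as the paper: the author proves the corollary (see the proof of corollary \ref{cor::asym_F(r)} and the remark following it) by observing that all the $\mathbb{Q}$--line bundles in \eqref{eq::main_thm_intro} are built from determinants of pushforwards and hence carry natural linearizations, then taking the $\mathbb{C}^\times$--weight over the central fibre and invoking theorem \ref{thm::PT_Fut} of Paul--Tian to identify the CM--weights with Futaki invariants. The equivariance issue you flag as the main obstacle is indeed the only nontrivial point, and the paper likewise treats it as automatic from the construction.
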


If in addition $Y_1,\dots,Y_s$ are top--dimensional irreducible components of $Y$ and $Y_j$ has scheme--theoretic multiplicity $m_j$ in $Y$ (in other words at level of ideal sheaves we have $\mathcal I_Y = \mathcal I_{Y_1}^{m_1} \cap \dots \cap \mathcal I_{Y_s}^{m_s}$), we can prove it holds $$ w_{\rm CM}(Y,X,L) = \sum_{j=1}^s w_{\rm CM}(Y_j,X,L)\,m_j^{n-d-1},$$ whence by corollary \ref{cor::fut_exp_intro} we get

\begin{cor}\label{cor::fut_exp_mult_intro}
For $r$ sufficiently large we have $$ F\left(\tilde X, L_r\right) = F\left(X,L\right) + \frac{\sum_{j=1}^s w_{\rm CW} \left(Y_j,X,L\right)}{r^{n-d-1}} + O\left(\frac{1}{r^{n-d}}\right). $$ 
\end{cor}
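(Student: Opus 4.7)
The plan is to derive the statement by substituting into Corollary~\ref{cor::fut_exp_intro} the multiplicity--weighted decomposition
\begin{equation*}
w_{\rm CW}(Y,X,L)=\sum_{j=1}^{s} w_{\rm CW}(Y_j,X,L)\,m_j^{n-d-1}
\end{equation*}
announced in the paragraph just before the corollary. Granted this identity, the conclusion is immediate from Corollary~\ref{cor::fut_exp_intro}, so the real work lies in establishing the decomposition itself.

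To establish the decomposition I would work from the intersection--theoretic description of $w_{\rm CW}$ recalled in section~\ref{sec::CM&CW}. The key observation is that $\mathcal I_Y=\mathcal I_{Y_1}^{m_1}\cap\cdots\cap\mathcal I_{Y_s}^{m_s}$ implies that, generically along each $Y_j$, the subscheme $Y$ coincides with the $m_j$--th order thickening $Y_j^{(m_j)}$ defined by $\mathcal I_{Y_j}^{m_j}$. Since blowing up $\mathcal I_{Y_j}^{m_j}$ yields the same modification as blowing up $\mathcal I_{Y_j}$ but with exceptional divisor replaced by $m_j$ times the reduced exceptional divisor, every $(n-d-1)$--st self--intersection of the exceptional divisor along the fibres acquires a factor $m_j^{n-d-1}$, which matches the intersection order entering $w_{\rm CW}$.

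The main obstacle I foresee is separating the contributions of the various components when they meet or carry embedded components: the primary decomposition is additive at the level of cycles but not obviously so at the level of the blown--up scheme and of the associated Segre classes. My plan for this step is to restrict to the complement of the pairwise intersections $Y_i\cap Y_j$, which form a closed subscheme of dimension strictly less than $d$ and hence contribute only to lower--order terms in $r^{-1}$ that are absorbed into the $O(r^{-(n-d)})$ remainder; a global assembly via the Segre classes of the normal cones to the individual $Y_j$ then gives the stated additive formula and completes the proof.
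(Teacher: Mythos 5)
Your top--level reduction coincides with the paper's: Corollary \ref{cor::fut_exp_mult_intro} is deduced from Corollary \ref{cor::fut_exp_intro} together with the decomposition $w_{\rm CW}(Y,X,L)=\sum_{j}w_{\rm CW}(Y_j,X,L)\,m_j^{n-d-1}$, and all the content is in that decomposition. Where you diverge is in how you prove it. The paper establishes the stronger identity at the level of $\mathbb Q$--line bundles on $B$ (Lemma \ref{lem::lAC_mor}): for two components with $m_j=1$ it tensors the exact sequence $0\to\mathcal O_X/(\mathcal I_{Y_1}\cap\mathcal I_{Y_2})\to\mathcal O_X/\mathcal I_{Y_1}\oplus\mathcal O_X/\mathcal I_{Y_2}\to\mathcal O_X/(\mathcal I_{Y_1}+\mathcal I_{Y_2})\to 0$ with $L^h$ and pushes forward; the overlap term contributes only in degree $\leq d$ in $h$ and so never reaches the coefficients $b_{0,1},\rho_{0,1}$ defining the CW--line. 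The multiplicity factor is then the tautology $(\mathcal I_Y^{m})^{k}=\mathcal I_Y^{mk}$, which rescales every $k^{n-d-j}$--coefficient of the expansions \eqref{eq::rk_sub}, \eqref{eq::det_sub} by $m^{n-d-j}$, giving $m^{n-d-1}$ for the term that enters $\lambda_{\rm CW}$; this is the rigorous form of your ``exceptional divisor replaced by $m_jE_j$'' heuristic, and it avoids having to identify $\rho_{0,1}$ with an actual $(n-d-1)$--fold self--intersection (it is a subleading Knudsen--Mumford coefficient, not a pure intersection number). Your localization step needs more care than you give it: $w_{\rm CW}$ is a weight on a global determinant of pushforwards, so ``restricting to the complement of $Y_i\cap Y_j$'' must be implemented by precisely such an exact sequence or by a Segre--class additivity statement you would still have to prove; moreover, in the test--configuration setting the overlaps lie in the central fibre and can have dimension equal to $d$ (not strictly less, as you assert) --- what rescues the argument is that they are not dominant over $B$, so their rank contribution vanishes and their determinant contribution stays in degree $\leq d$ in $h$, landing in the $O(r^{-(n-d)})$ remainder exactly as you predict. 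Finally, note that your derivation (correctly) produces the factors $m_j^{n-d-1}$ in the numerator, in agreement with the decomposition stated just before the corollary and with Corollary \ref{cor::asym_F(r)}; the printed statement of Corollary \ref{cor::fut_exp_mult_intro} omits them and should be read as including them.
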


In this case, thanks to smoothness hypothesis on fibers of $\pi|_Y$, the $Y_j$'s meet only over $t=0$ and for $t \neq 0$ the fiber $\tilde X_t$ is the blow--up of $X_t$ along the submanifolds $\pi|_{Y_1}^{-1}(t), \dots, \pi|_{Y_s}^{-1}(t)$. Thus the family $(\tilde X/\mathbb C ,L_r)$ is a test configuration for the polarized manifold $$ \left(\tilde M, \beta^*A^r \otimes \mathcal O(-\sum_{j=1}^s m_jE_j) \right),$$ where $E_j$ is the exceptional divisor over $\pi|_{Y_j}^{-1}(1)$ (the latter, endowed with the structure of reduced scheme).

In case $Y$ has relative dimension zero ({\it i.e.} $d=0$, so that $\tilde M$ is the blow--up of $M$ at a bunch of points), corollary \ref{cor::fut_exp_mult_intro} reduces to a result originally due to Stoppa \cite{Sto08} (but see also subsequent paper \cite[proposition 2.13]{Sto08bis}). We notice that methods used by Stoppa are quite different than our ones: in particular he prove his formula by means of a careful study of the geometry of the central fiber of the blown-up test configuration $\tilde X$.

At this point we recall partial proof of the Yau--Tian--Donaldson conjecture due to Donaldson 

\begin{thm}[Donaldson, \cite{Don02}]\label{thm::cscK_imp_K-stab_intro}
If the polarized manifold $(M,A)$ is K--unstable ({\it i.e.} there exists a test configuration $(X/\mathbb C,L)$ for $(M,A)$ such that $F(X,L) < 0$), then there are no cscK metrics in $c_1(L)$. 
\end{thm}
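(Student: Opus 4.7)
The plan is to exploit the Mabuchi K--energy functional $\mathcal{M}$ on the space $\mathcal{H}$ of K\"ahler potentials in $c_1(A)$, whose Euler--Lagrange equation is precisely the cscK equation. A cscK metric $\omega \in c_1(A)$ is therefore a critical point of $\mathcal{M}$, and by convexity of $\mathcal{M}$ along geodesics in $\mathcal{H}$ (Mabuchi, Donaldson, Chen), it is in particular a global minimizer. The strategy is to convert the hypothesized test configuration with $F(X,L) < 0$ into a one--parameter family of potentials along which $\mathcal{M}$ decreases linearly, contradicting this minimizing property.

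Concretely, I would first associate to $(X/\mathbb{C},L)$ a weak geodesic ray $\{\varphi_t\}_{t \geq 0}$ in $\mathcal{H}$ emanating from $\omega$, via the Phong--Sturm construction: pick a smooth $S^1$--invariant Hermitian metric on $L$ over the total space, pull it back via the $\mathbb{C}^\times$--action to produce a natural subgeodesic ray, and upgrade it to a genuine geodesic ray by solving the homogeneous complex Monge--Amp\`ere equation. The key calculation is then the identity
\[ \lim_{t \to \infty} \frac{\mathcal{M}(\varphi_t)}{t} = c \cdot F(X,L) \]
for a positive constant $c$ depending only on $n = \dim M$ and the volume of $(M,A)$. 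This identification is established by combining an equivariant Riemann--Roch expansion on the central fibre $X_0$, which extracts $F(X,L)$ as the subleading coefficient of the $\mathbb{C}^\times$--weight on $\det H^0(X_0, L|_{X_0}^k)$ (the Paul--Tian interpretation already recalled above), with the Tian--Yau--Zelditch expansion of the Bergman kernels that governs $\mathcal{M}(\varphi_t)$. Once this identity is in hand, $F(X,L) < 0$ forces $\mathcal{M}(\varphi_t) \to -\infty$ linearly, contradicting $\mathcal{M}(\varphi_t) \geq \mathcal{M}(\varphi_0)$.

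The principal obstacle is precisely this identification of the analytic slope of $\mathcal{M}$ with the algebraic invariant $F(X,L)$. The difficulty is that as $t \to \infty$ the geodesic ray degenerates onto the possibly singular central fibre $X_0$, and one must justify interchanging the $t \to \infty$ limit with the $k \to \infty$ limit implicit in the quantization definition of $\mathcal{M}$. A shorter route, closer to Donaldson's original 2002 argument, bypasses the geodesic step entirely: one invokes Donaldson's quantization theorem that a cscK metric with discrete automorphism group is a $C^\infty$--limit of balanced metrics for $(M, A^k)$, whence $(M, A^k)$ is Chow semistable for all large $k$; the Donaldson--Futaki invariant then appears as the leading coefficient of the Chow weight of the test configuration in a $1/k$ expansion, and non--negativity of the latter forces $F(X,L) \geq 0$, contradicting the hypothesis. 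The remaining non--discrete automorphism case follows by a standard perturbation argument on the polarization.
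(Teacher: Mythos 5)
The paper does not actually prove this statement: it is imported verbatim from Donaldson \cite{Don02} and used as a black box (indeed the introduction separately cites \cite{Don05,Sto08bis} for results of this type). So the only meaningful question is whether your sketch would close the argument on its own, and as written it would not, for two concrete reasons.

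In your preferred route, the identity $\lim_{t\to\infty}\mathcal{M}(\varphi_t)/t = c\cdot F(X,L)$ is false as stated: the slope at infinity of the K--energy along the geodesic ray attached to a test configuration equals the Donaldson--Futaki invariant only up to a correction term supported on the non--reduced part of the central fibre, so one only has an inequality (with equality iff $X_0$ is reduced). The inequality happens to point the right way for your contradiction, but the two analytic inputs you lean on --- that a cscK metric globally minimizes $\mathcal{M}$ (which requires convexity along \emph{weak} $C^{1,1}$ geodesics, i.e.\ Berman--Berndtsson, since smooth geodesics need not exist) and the slope formula itself --- are exactly where the whole difficulty of the theorem lives. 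You name the interchange of the $t\to\infty$ and $k\to\infty$ limits as ``the principal obstacle'' and then do not address it; a sketch that defers its only hard step to an unproved identity is not a proof.

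Your fallback route via balanced metrics is essentially Donaldson's actual argument, but it is only valid when ${\rm Aut}(M,A)$ is discrete, and the closing sentence that the non--discrete case ``follows by a standard perturbation argument on the polarization'' is a genuine gap. No such perturbation exists: one cannot deform $A$ to kill continuous automorphisms while keeping a cscK metric, and cscK manifolds with continuous automorphisms need not be asymptotically Chow stable at all (Ono--Sano--Yotsutani), so the balanced--metric mechanism breaks irreparably there. The non--discrete case is not a technicality here: theorem \ref{thm::no_cscK_intro} is applied precisely to manifolds carrying nontrivial $\mathbb{C}^\times$--actions (e.g.\ blow--ups of projective spaces along linear subspaces), and all the destabilizing test configurations in this paper arise from such actions. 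That case requires Donaldson's lower bound on the Calabi functional \cite{Don05} (or Stoppa's refinement \cite{Sto08bis}), a genuinely different finite--dimensional convexity argument that you would need to import explicitly rather than wave at.
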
 
Thus, if $(X/\mathbb C,L)$ is a destabilizing test configuration for $(M,L)$, taking a submanifold $N \subset M$ and letting $Y$ be the closure of the trajectory of $N$ under the $\mathbb C^\times$--action on $X$, by discussion above and corollary \ref{cor::fut_exp_intro} we know that the blow--up $\tilde M$ of $M$ with center $N$ admits no cscK metrics in the class $c_1\left( \beta^*A^r \otimes \mathcal O(-E) \right)$ for $r$ sufficiently large. In other words the blow--up of a K-unstable polarized manifolds remains unstable with polarizations which make the exceptional divisor small enough. More interestingly, if $(M,L)$ is K--semistable ({\it i.e.} not unstable) and $(X/\mathbb C,L)$ is now a non--trivial test configuration with $F(X,L)=0$ (which exists whenever there is a non--trivial $\mathbb C^\times$--action on $M$), then the instability of $\left(\tilde M, \beta^* A^r \otimes \mathcal O(-E) \right)$ for $r \gg 0$ is implied by the condition $w_{\rm CW}(Y,X,L)<0$. The latter being exactly the definition of Chow--instability of $N \subset M$ if $X=M\times \mathbb C$ is a product configuration. In this circle of ideas thanks to theorem \ref{thm::cscK_imp_K-stab_intro} we prove the following 

\begin{thm}\label{thm::no_cscK_intro}
Let $(M,A)$ be a polarized manifold admitting a cscK metric with K\"ahler class $c_1(A) \in H^{1,1}(M,\mathbb C)$. Let $ N_1, \dots, N_s \subset M$ be pairwise disjoint submanifolds of co--dimension greater than two.
Let $\beta: \tilde M \to M$ be the blow-up of $M$ along $N_1 \cup \dots \cup N_s$ with $E_j$ the exceptional divisor over $N_j$. For $m_1, \dots, m_s \in \mathbb N$ consider the sub--scheme $N$ cut out by the ideal sheaf $\mathcal I_N = \mathcal I_{N_1}^{m_1} \cap \dots \cap \mathcal I_{N_s}^{m_s}$.

If $N \subset M$ is Chow--unstable then the class $$c_1\left( \beta^*A^r\otimes \mathcal O( - \sum_{j=1}^s m_j E_j) \right) \in H^{1,1}(\tilde M,\mathbb C)$$ contains no cscK metrics for $r \gg 0$.
\end{thm}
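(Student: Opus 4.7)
The plan is to produce a K-destabilizing test configuration for the polarized manifold $(\tilde M, \beta^*A^r \otimes \mathcal O(-\sum_j m_j E_j))$ and then invoke Donaldson's Theorem~\ref{thm::cscK_imp_K-stab_intro}. For $r$ sufficiently large this line bundle is ample on $\tilde M$, by the standard ampleness criterion for blow-ups of smooth centers of positive codimension, so it really is a polarization.

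By the definition of Chow-instability used in the excerpt, the hypothesis on $N \subset (M,A)$ provides a non-trivial one-parameter subgroup $\sigma$ of $\mathrm{Aut}(M, A)$ such that, setting $X := M \times \mathbb C$ with $L := \mathrm{pr}_M^*A$ and $\mathbb C^\times$-action $\lambda \cdot (m, t) := (\sigma_\lambda m, \lambda t)$, the flat closure $Y \subset X$ of the orbit of $N \subset M \times \{1\}$ satisfies $w_{\rm CW}(Y, X, L) < 0$. Because $(M, A)$ admits a cscK metric, the classical analytic Futaki invariant of $\sigma$ vanishes, and the Paul--Tian weight formula for the CM-line recalled in the excerpt then gives $F(X, L) = 0$. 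The fibers $Y_t$ for $t \neq 0$ are the smooth disjoint translates $\sigma_t(N_1) \cup \cdots \cup \sigma_t(N_s)$ of the $N_j$ carrying their multiplicities $m_j$, so the smoothness hypothesis of Corollary~\ref{cor::fut_exp_intro} is met.

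Plugging $F(X, L) = 0$ into Corollary~\ref{cor::fut_exp_intro} yields
\[
F(\tilde X, L_r) \;=\; \frac{w_{\rm CW}(Y, X, L)}{r^{n-d-1}} \;+\; O\!\left(\frac{1}{r^{n-d}}\right),
\]
which is strictly negative for every sufficiently large $r$, the leading coefficient being negative by Chow-instability. As already explained in the excerpt, $(\tilde X, L_r)$ is a test configuration for $(\tilde M, \beta^*A^r \otimes \mathcal O(-\sum_j m_j E_j))$: the pairwise disjointness of the $N_j$ implies that the exceptional divisor of the scheme-theoretic blow-up along $N$ satisfies $\mathcal O_{\tilde M}(-E) = \mathcal O(-\sum_j m_j E_j)$. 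Theorem~\ref{thm::cscK_imp_K-stab_intro} then rules out a cscK metric in $c_1(\beta^*A^r \otimes \mathcal O(-\sum_j m_j E_j))$, as claimed.

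The step requiring the most care is the identification $F(X, L) = 0$: one needs to confirm that the algebraic weight on the CM-line of the product test configuration determined by $\sigma$ coincides with the vanishing analytic Futaki invariant of $\sigma$ acting on the cscK manifold $(M, A)$. This is precisely the content of Paul--Tian's weight formula in the product case, so no genuinely new analytic input is needed. The remaining verifications -- ampleness of $L_r$ and non-triviality of $(\tilde X, L_r)$ as a test configuration -- are standard consequences of the setup and of the non-triviality of $\sigma$.
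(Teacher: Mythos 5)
Your proof is correct and follows essentially the same route as the paper: a product test configuration built from the destabilizing one-parameter subgroup, vanishing of $F(X,L)$ from the cscK hypothesis via the Paul--Tian weight formula, the asymptotic expansion of Corollary \ref{cor::fut_exp_intro} (the paper invokes its specialization to product configurations, Corollary \ref{cor::asym_F(r)}, together with Lemma \ref{lem::lAC_mor} to account for the multiplicities $m_j$), and finally Donaldson's Theorem \ref{thm::cscK_imp_K-stab_intro}. The only discrepancy is the sign of the destabilizing Chow weight, which traces back to the paper's own inconsistent conventions (the introduction takes $w_{\rm CW}<0$ as Chow--unstable, matching the condition $F<0$ in Theorem \ref{thm::cscK_imp_K-stab_intro}, whereas the remark in Section 3 declares instability when the weight is positive); your version is internally consistent with the statement of the theorem you invoke.
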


Thanks to the well known geometric characterization of Chow stability of configuration of linear subspaces of $\mathbb P^n$ (see for example \cite{MumFogKir94,Dol03} or \cite{Muk03} for points), theorem \ref{thm::no_cscK_intro} gives infinitely many examples of polarized manifolds with no cscK metrics. For examples involving  blow--up of points we refer to \cite{Sto08}, here we give an example with higher dimensional center of blow--up.

\begin{ex}[Projective space blown--up at a pair of skew linear subspaces]
From Nadel \cite[example 6.4]{Nad90} we know that the blow--up of $\mathbb P^{2r+1}$ along a pair of skew $r$-dimensional linear subspaces $L_1$, $L_2$ admits the K\"ahler--Einstein metric. Thus we have the cscK metric in the canonical class $c_1 \left( \mathcal O \left((2r+2)H -r(E_1+E_2)\right)\right)$, where $H$ is the pull--back of the hyperplane class and $E_1$, $E_2$ are the exceptional divisors respectively over $L_1$ and $L_2$. On the other hand, by geometric criterion aforementioned we know that the sub-scheme $L$ cut by $\mathcal I_L = \mathcal I_{L_1}^{m_1} \cap \mathcal I_{L_2}^{m_2}$ is Chow-unstable if $m_1 \neq m_2$. Thus we have no cscK metrics in the classes $c_1\left(\mathcal O \left( kH -m_1E_1-m_2E_2 \right)\right)$ with $m_1 \neq m_2$ and $k$ sufficiently large.
\end{ex} 

Theorem \ref{thm::cscK_imp_K-stab_intro} has an analogous for eK metrics due to Sz\'ekelyhidi \cite{Sze06}, where one has to look to a restricted class of test configuration to get the relevant instability (or equivalently one can correct the Futaki invariant with an additional term). Thanks to this result, with a little additional effort, from corollary \ref{cor::fut_exp_intro} we can get a non-existence theorem for eK metrics on blown--up manifolds. 

\begin{thm}\label{thm::no_eK_intro}
Let $(M,A)$ be a polarized manifold admitting an eK metric with K\"ahler class $c_1(A) \in H^{1,1}(M,\mathbb C)$. Let $ N_1, \dots, N_s \subset M$ be pairwise disjoint submanifolds of co--dimension greater than two and assume there exists an extremal vector field of the class $c_1(A)$ which is tangent to each $N_j$.
Let $\beta: \tilde M \to M$ be the blow-up of $M$ along $N_1 \cup \dots \cup N_s$ with $E_j$ the exceptional divisor over $N_j$. For $m_1, \dots, m_s \in \mathbb N$ consider the sub--scheme $N$ cut out by the ideal sheaf $\mathcal I_N = \mathcal I_{N_1}^{m_1} \cap \dots \cap \mathcal I_{N_s}^{m_s}$.

If $N \subset M$ is relatively Chow--unstable then the class $$c_1\left( \beta^*A^r\otimes \mathcal O( - \sum_{j=1}^s m_j E_j) \right) \in H^{1,1}(\tilde M,\mathbb C)$$ contains no eK metrics for $r \gg 0$.
\end{thm}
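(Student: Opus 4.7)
\emph{Proof proposal.} The plan is to parallel the argument leading to Theorem \ref{thm::no_cscK_intro}, but replace Donaldson's obstruction (Theorem \ref{thm::cscK_imp_K-stab_intro}) with Sz\'ekelyhidi's relative version for extremal K\"ahler metrics: if $(\tilde M, \beta^*A^r \otimes \mathcal O(-\sum_j m_j E_j))$ admits an eK metric, then for every test configuration compatible with the extremal vector field the relative Futaki invariant is non--negative.

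Exploiting the relative Chow--instability of $N\subset (M,A)$, I first choose a one--parameter subgroup $\lambda:\mathbb C^\times \to \mathrm{Aut}(M,A)$ commuting with the extremal vector field $\xi$ of $(M,A)$ and satisfying $w_{\rm CW}^{\rm rel}(N,M,A;\lambda) < 0$. The hypothesis that $\xi$ is tangent to each $N_j$ allows me to build the product test configuration $X = M \times \mathbb C$ with $\mathbb C^\times$ acting via $\lambda$, and to take as centre of blow--up the invariant sub--scheme $Y \subset X$ obtained as the $\lambda$--orbit closure of $N$. Since $\lambda$ and $\xi$ commute, the lift of $\xi$ to $\tilde X = \mathrm{Bl}_Y X$ again commutes with the $\mathbb C^\times$--action, so $(\tilde X, L_r)$ is a test configuration for $(\tilde M, \beta^*A^r \otimes \mathcal O(-\sum_j m_j E_j))$ compatible with the lifted extremal vector field.

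Corollary \ref{cor::fut_exp_mult_intro} gives the asymptotic
\begin{equation*}
F(\tilde X, L_r) = F(X,L) + \frac{\sum_j w_{\rm CW}(Y_j,X,L)\,m_j^{n-d-1}}{r^{n-d-1}} + O\!\left(r^{-(n-d)}\right),
\end{equation*}
and applied to the product test configuration of $\tilde M$ generated by $\xi$ it yields an analogous expansion of the Futaki--Mabuchi pairing $\langle\lambda,\xi\rangle_{L_r}$ that enters the definition of $F^{\rm rel}$. The existence of an eK metric on $(M,A)$ forces $F(X,L) = \langle\lambda,\xi\rangle_A$, so the $r^0$--term cancels from the relative Futaki invariant. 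A careful matching of the coefficients at order $r^{-(n-d-1)}$ then identifies
\begin{equation*}
F^{\rm rel}(\tilde X, L_r) = \frac{\sum_j w_{\rm CW}^{\rm rel}(N_j,M,A;\lambda)\,m_j^{n-d-1}}{r^{n-d-1}} + O\!\left(r^{-(n-d)}\right),
\end{equation*}
which is strictly negative for $r$ large by the choice of $\lambda$. Sz\'ekelyhidi's theorem then rules out eK metrics in the prescribed class.

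The main obstacle I anticipate is precisely this last matching: the $r^{-(n-d-1)}$--coefficient of the Futaki--Mabuchi correction on $(\tilde M,L_r)$ must coincide with the correction implicit in $w_{\rm CW}^{\rm rel}$. Establishing it requires the asymptotic expansions, as $r\to\infty$, of the volume, of the total scalar curvature, and of the Hamiltonians of $\xi$ and $\lambda$ on the blown--up polarization; the remaining steps are a direct application of Corollary \ref{cor::fut_exp_mult_intro} together with Sz\'ekelyhidi's relative stability obstruction.
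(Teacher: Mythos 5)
Your overall strategy coincides with the paper's: pick a destabilizing one--parameter subgroup in the relative group, form the product configuration, blow up the orbit closure of $N$, expand the Futaki invariant via Corollary \ref{cor::fut_exp_mult_intro}, and invoke Sz\'ekelyhidi's relative obstruction. But the step you defer as ``a careful matching of the coefficients at order $r^{-(n-d-1)}$'' is not a routine verification --- it is the actual content of the proof, and as stated your plan does not close it. The issue is that the corrected Futaki invariant contains the term $\langle \tilde\alpha_0, \tilde\chi^{(r)}_0\rangle_{c_1(L_r|_{\tilde X_0})}$, where $\tilde\chi^{(r)}$ is the extremal action of the \emph{blown--up} class $c_1(\beta^*A^r\otimes\mathcal O(-\sum m_jE_j))$, not of $c_1(A)$. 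One must therefore expand the extremal vector field of the blown--up class in $r$; the paper shows $\eta_r = r^{-2}\eta + r^{-(n-d+1)}\gamma + O(r^{-(n-d+2)})$, where $\gamma$ is the vector field dual (via the Futaki--Mabuchi product) to the Chow--weight functional of $N$. This produces an a priori nonzero contribution $r^{-(n-d-1)}\langle\dot\alpha,\gamma\rangle_{c_1(A)}$ at exactly the order where you want to read off the instability, so without controlling it your claimed identity for $F^{\rm rel}(\tilde X,L_r)$ does not follow.

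The paper kills this term by a specific choice you do not make: the splitting $Z_{{\rm Aut}(M)}(\tilde T)/\tilde T\hookrightarrow {\rm Aut}(M)$ is fixed using the Futaki--Mabuchi scalar product, so that the destabilizer satisfies $\dot\alpha\in{\rm Lie}(\tilde T)^{\perp}$ and hence $\langle\dot\alpha,\gamma\rangle_{c_1(A)}=0$ (equation \eqref{eq::ortho_alpha_gamma}), while $w_{\rm CW}(N,M,A)=\langle\dot\alpha,\gamma_0\rangle_{c_1(A)}>0$ survives. Merely requiring $\lambda$ to commute with the extremal field $\xi$, as you do, is not enough for this orthogonality. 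The remaining ingredients you would also need to supply are the expansion \eqref{eq::fut_mab_asym} of the Futaki--Mabuchi product on the central fiber, $\langle\lambda_1,\lambda_2\rangle_{c_1(L_r|_{X_0})}=r^2\langle\dot\lambda_1,\dot\lambda_2\rangle_{c_1(A)}+O(r^{-(n-d-2)})$, obtained in the paper by embedding $\tilde X$ projectively via $L_r$ and comparing potentials on the central and generic fibers, and the use of the tangency hypothesis on the $N_j$ to ensure the extremal field lies in ${\rm Lie}(\tilde T)$ and lifts to $\tilde M$. With those pieces added, your argument becomes the paper's proof; without them it has a genuine gap precisely where you anticipated one.
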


Here by {\it relatively Chow--unstable} we mean that, keeping notations as above, $X=M \times \mathbb C$ is a product configuration and $N \subset M$ is unstable with respect to a one--parameter subgroup of $Z_{{\rm Aut}(M)}(T)/T$, where $T \subset {\rm Aut}(M)_N$  is a fixed maximal torus of the stabilizer ${\rm Aut}(M)_N= \left\{ g \in {\rm Aut}(M) \,|\, g(N) = N \right\}$ and $Z_{{\rm Aut}(M)}(T)$ is the identity component of the centralizer of $T$ in ${\rm Aut}(M)$. In general relative stability is weaker than stability, but they are equivalent when $T$ is trivial. A geometric characterization of relative stability of configuration of points in $\mathbb P^n$ will be given in section \ref{sec::rel_stab_P^n}. In the very special case of $\mathbb P^2$ we have the following

\begin{thm}\label{thm::rel_stab_P2}
For fixed points $p_1,\dots,p_s \in \mathbb P^2$ and multiplicities $m_1,\dots,m_s \in \mathbb N$, let $N$ be the sub-scheme (configuration of points) of $\mathbb P^2$ cut by the ideal $\mathcal I_N = \mathcal I_{p_1}^{m_1} \cap \dots \cap \mathcal I_{p_s}^{m_s}$. We have the following cases 
\begin{enumerate}\renewcommand{\theenumi}{\roman{enumi}}
\item \label{thm::rel_stab_P2;few_points} if $s \leq 2$ or $s=3$ and $p_1,p_2,p_3$ are non--aligned, then $N$ is relatively Chow--stable,
\item \label{thm::rel_stab_P2;line} if $s \geq 3$ and $p_j$'s lie on a line $L$ , then $N$ is relatively Chow--stable if and only if it is Chow--stable as a sub-scheme of $L$,
\item \label{thm::rel_stab_P2;line+point} if $s \geq 4$ and $p_2,\dots,p_s$ lie on a line $L$, then $N$ is relatively Chow--stable if and only if the sub-scheme $N'$ cut by the ideal $\mathcal I_{N'} = \mathcal I_{p_2}^{m_2} \cap \dots \cap \mathcal I_{p_s}^{m_s}$ it is Chow--stable as a sub-scheme of $L$,
\item \label{thm::rel_stab_P2;many_nonal_points} if $s \geq 4$ and four points among $p_j$'s are three by three non-aligned, then $N$ is relatively Chow--stable if and only if it is Chow--stable.  
\end{enumerate}  
\end{thm}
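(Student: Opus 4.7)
The plan is a case-by-case analysis driven by the structure of the stabilizer $\mathrm{Aut}(\mathbb{P}^2)_N$ in $\mathrm{PGL}(3)$: in each case I would identify a maximal torus $T$, compute the quotient $Z_{\mathrm{PGL}(3)}(T)/T$, and then reduce the (relative) Chow weight of $N$ to a problem one can read off directly.

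I would first dispose of the extremal cases \ref{thm::rel_stab_P2;few_points} and \ref{thm::rel_stab_P2;many_nonal_points}. In case \ref{thm::rel_stab_P2;few_points}, normalize coordinates so that the $p_j$'s lie among $[1{:}0{:}0],[0{:}1{:}0],[0{:}0{:}1]$; then the diagonal torus $T\cong(\mathbb{C}^\times)^2$ preserves $N$ and is already a maximal torus of $\mathrm{PGL}(3)$, so $Z_{\mathrm{PGL}(3)}(T)=T$ and $Z/T$ is trivial. Hence there is no nontrivial $1$-PS available and $N$ is automatically relatively Chow-stable. Conversely in case \ref{thm::rel_stab_P2;many_nonal_points} a quadruple of $p_j$'s in three-by-three general position forces any element of $\mathrm{PGL}(3)$ fixing them to be the identity, so $\mathrm{Aut}(\mathbb{P}^2)_N$ is finite, $T$ is trivial and $Z/T=\mathrm{PGL}(3)$; relative Chow-stability then coincides with ordinary Chow-stability.

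For the substantive cases \ref{thm::rel_stab_P2;line} and \ref{thm::rel_stab_P2;line+point} I would take coordinates so that $L=\{z=0\}$ and, in case \ref{thm::rel_stab_P2;line+point}, $p_1=[0{:}0{:}1]$. Since at least three of the $p_j$'s sit on $L$ and $\mathrm{PGL}(L)\cong\mathrm{PGL}(2)$ acts 3-transitively on $\mathbb{P}^1$, any element of $\mathrm{Aut}(\mathbb{P}^2)_N$ must restrict to the identity on $L$. Thus $\mathrm{Aut}(\mathbb{P}^2)_N$ is contained in the $3$-dimensional subgroup fixing $L$ pointwise (which automatically fixes $p_1$ in case \ref{thm::rel_stab_P2;line+point} as well), whose maximal torus is $T=\{\mathrm{diag}(1,1,t):t\in\mathbb{C}^\times\}$. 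A direct matrix computation then gives $Z_{\mathrm{PGL}(3)}(T)=\{\mathrm{diag}(A,c):A\in\mathrm{GL}(2),\,c\in\mathbb{C}^\times\}/\mathbb{C}^\times$, and therefore $Z/T\cong\mathrm{PGL}(2)$, identified with the full automorphism group $\mathrm{Aut}(L)$.

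Finally, given a $1$-parameter subgroup $\rho$ of $Z/T$, I would lift it to $\mathrm{diag}(A(t),1)\in\mathrm{PGL}(3)$ and consider the product test configuration $Y\subset\mathbb{P}^2\times\mathbb{C}$ obtained as the closure of the $\rho$-orbit of $N$. The Chow weight $w_{\mathrm{CW}}(Y,\mathbb{P}^2\times\mathbb{C},\mathcal{O}(1))$ splits as a sum of local contributions from the $(p_j,m_j)$'s; in case \ref{thm::rel_stab_P2;line+point} the contribution from $(p_1,m_1)$ vanishes because $\rho$ fixes $p_1$ modulo $T$, leaving only contributions from points on $L$, and in case \ref{thm::rel_stab_P2;line} every contribution already comes from $L$. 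What remains is then compared with the Chow weight of $N$ (respectively $N'$) as a subscheme of $L$ under the corresponding $1$-PS of $\mathrm{PGL}(L)$. This last identification is the part of the argument I expect to be the main obstacle: one must track Mumford's normalization carefully on the relevant Chow form, using that $L$ is $\rho$-invariant and that the transverse direction is precisely the $T$-factor being quotiented out, to conclude that the two Chow weights differ only by a positive normalization constant depending on the polarizations. Once that step is in place, the sign of $w_{\mathrm{CW}}$ is the same as the sign of the Chow weight on $L$ and the equivalences stated in \ref{thm::rel_stab_P2;line} and \ref{thm::rel_stab_P2;line+point} follow.
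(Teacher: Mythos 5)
Your structural analysis of the stabilizers and of $G_T=Z_G(T)/T$ in each of the four cases is correct and in fact reproduces, by hand for $\mathbb P^2$, exactly the decomposition the paper obtains in general via Lemma \ref{lem::deco} and Theorem \ref{thm::rel_sta} (your torus $T={\rm diag}(1,1,t)$ and $G_T\cong PGL(2)$ in cases \ref{thm::rel_stab_P2;line} and \ref{thm::rel_stab_P2;line+point} are the paper's $\Lambda=L$ resp. $\Lambda=\{p_1\}+L$). The cases \ref{thm::rel_stab_P2;few_points} and \ref{thm::rel_stab_P2;many_nonal_points} are complete as you present them. But in the two substantive cases your argument has a genuine gap, and it is precisely the step you yourself flag as ``the main obstacle'': you never establish that the Chow weight of $N\subset\mathbb P^2$ under a one--parameter subgroup ${\rm diag}(A(t),1)$ of $G_T$ agrees, up to a positive constant, with the Hilbert--Mumford weight of $N$ (resp. $N'$) viewed inside $L$. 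As stated this comparison is also delicate to even formulate in your language: $w_{\rm CW}$ as defined in section \ref{sec::CM&CW} requires codimension at least two, so ``the Chow weight of $N$ as a subscheme of $L\cong\mathbb P^1$'' cannot be taken literally (the normalizing factor $\tfrac{n!}{2(d+1)!(n-d-2)!}$ degenerates for $n=1$, $d=0$); what the theorem means by Chow--stability on $L$ is GIT stability of the binary form $\prod_j l_j^{m_j}$, as in Theorem \ref{thm::abs_sta}. So the decisive equivalence is asserted, not proved.

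The paper closes exactly this gap by working with the Chow form rather than with weights. Since Chow--stability of the configuration is by definition GIT stability of $f_P=\prod_j l_j^{m_j}\in\mathbb C[x_0,x_1,x_2]_m$, and in adapted coordinates $f_P$ factors as $f_1\cdots f_s$ with $f_j$ a form in the variables of $\Lambda_j$ only, the action of $\iota(G_T)$ splits blockwise as in \eqref{eqn::split_act}: each block $SL(d_j)$ acts on its factor $f_j$ and trivially on the others. Instability, semistability, closedness of orbits and finiteness of stabilizers for $f_P$ under $\iota(G_T)$ then reduce immediately to the same properties of each $f_j$ under $SL(d_j)$ --- no weight computation, no normalization constants, and no separate treatment of the fixed point $p_1$ in case \ref{thm::rel_stab_P2;line+point} (its factor $f_1=x_2^{m_1}$ is acted on trivially and contributes nothing). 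If you want to keep your weight--theoretic route, you would have to prove your ``positive normalization constant'' claim, e.g.\ by computing both Hilbert--Mumford weights of the split Chow form explicitly; at that point you would have rederived the paper's factorization argument.
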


As is known since Calabi seminal paper \cite{Cal82}, the existence of eK metric on blow--up of $\mathbb P^2$ at one point is unobstructed, in accordance with theorems \ref{thm::rel_stab_P2} and \ref{thm::no_eK_intro}. More interestingly, for the blow--up of $\mathbb P^2$ at two points, by Arezzo--Pacard--Singer \cite{ArePacSin07}, Chen--LeBrun--Webber \cite{CheLeBWeb07} and He \cite{He07} the existence of eK metrics in classes making small the exceptional divisors and in all classes making the exceptional divisors with the same volume (also called bilaterally symmetric classes) is established. Moreover, for the blow-up of $\mathbb P^2$ at three non--aligned points we know there exist eK metrics in two set of classes: the canonical class and in classes nearby by Tian--Yau \cite{TiaYau87} and LeBrun--Simanca \cite{LeBSim94}, and in a class with exceptional divisors with the same volume grater than in the canonical class and in classes nearby thanks to Arezzo--Pacard--Singer \cite{ArePacSin07}. In this cases our theorems \ref{thm::rel_stab_P2} and \ref{thm::no_eK_intro} give no obstruction providing evidence to the existence of eK metric on each K\"ahler class.   

\begin{ex}[$\mathbb P^2$ blown--up at aligned points]
Consider the blow--up of $\mathbb P^2$ at $s \geq 3$ points lying on a line $L \subset \mathbb P^2$. Thanks to theorems \ref{thm::rel_stab_P2} and \ref{thm::no_eK_intro} we conclude that there are no eK metrics in the Chern class of the line bundle $\mathcal O \left(kH-\sum_{j=1}^s m_jE_j\right)$ if there is a $m_i$ greater than the sum of other ones and $k$ sufficiently large, being $E_j$'s the exceptional divisors on the blown--up points and $H$ the pull--back of the hyperplane class of $\mathbb P^2$.  
\end{ex}

\begin{ex}[$\mathbb P^2$ blown--up at non--aligned points]
In the situation of previous example, if we blow--up another point $p_0 \notin L$ then the sub-scheme of $L$ given by points $p_1,\dots,p_s$ with multiplicities $m_1,\dots,m_s$ is unchanged then we conclude that there are no eK metrics in the first Chern class of $\mathcal O \left(kH-\sum_{j=0}^s m_jE_j\right)$ if there is a $1 \leq i \leq s $ such that $m_i > \sum_{1 \leq j \leq s, j \neq i}m_j$ and $k \gg 0$. In particular $m_0$ is completely arbitrary. On the other hand, if we blow--up $\mathbb P^2$ at $s \geq 4$ points and four of them are three by three not aligned, then relative Chow--stability is the classical one and we have no eK-metrics if more than $\frac{2}{3}$ of points counted with respective multiplicity are aligned.   
\end{ex}

The example above is in fact a weak (due to some border--line case still open) converse of \cite[proposition 8.1]{ArePacSin07}, where are given sufficient conditions on the multiplicities $m_j$'s for the existence of eK metrics in the considered classes. 

\begin{ex}[$\mathbb P^3$ blown--up at three skew lines]
The stabilizer of three skew lines $L_1$, $L_2$, $L_3$ in $\mathbb P^3$ is $SL(2)$, thus the blow--up at that lines can {\it a priori} have eK metrics with non constant scalar curvature in some K\"ahler class. On the other hand, a direct computation of $w_{\rm CW}$ for each line (see remark \ref{rem::CW_weight}) shows that the scheme $N$ cut by the ideal sheaf $\mathcal I_N = \mathcal I_{L_1}^{m_1} \cap \mathcal I_{L_2}^{m_2} \cap \mathcal I_{L_3}^{m_3}$ is relatively Chow unstable if one of the multiplicities $\{m_1, m_2, m_3\}$ is grater than the sum of the others. In this case, by theorem \ref{thm::no_eK_intro} there are no eK metrics in the classes $c_1\left(\mathcal O \left( kH -m_1E_1 - m_2E_2 -m_3E_3 \right)\right)$ with $k\gg 0$, where $E_j$ is the exceptional divisor over $L_j$ and $H$ is the pull--back of the hyperplane class of $\mathbb P^3$.
\end{ex}

\subsection*{Acknowledgments}
The author would like to thank R. P. Thomas for the starting idea of studying the relation between relative K-stability of the blow--up of $\mathbb P^n$ at points and relative Chow--stability of the center of blow--up. It is a great pleasure to thank G. Tian, whose kind interest and support made possible the visit of the author to the mathematics department of Princeton University, where part of this work has been written. Finally the author want to express his deep gratitude to C. Arezzo for many enlightening discussions and precious encouragements.


\section{Relative GIT stability}

The idea of \emph{relative (GIT) stability} is buried in a number of papers \cite{Kir84,Mab04}. A clear treatment related to stability of polarized manifold is given by Sz\'ekelyhidi \cite{Sze06}. Following his definitions, here we recall some elementary facts.

Let $V$ be a finite dimensional $\mathbb C$--vector space acted on linearly by a reductive algebraic group $G$. According to GIT \cite{Mum77,Dol03,Tho06} , a point $v \in V$ is called
\begin{itemize}
\item \emph{unstable} if $0 \in \overline {G \cdot v}$,
\item \emph{semistable} if $0 \notin \overline {G \cdot v}$,
\item \emph{polystable} if $G \cdot v$ is closed in $V$ and $0 \notin G \cdot v$,
\item \emph{stable} if it is polystable and the stabilizer $G_v$ is finite.
\end {itemize}

Now let $H \subset G$ a reductive subgroup. Obviously the action of
$G$ induces an action of $H$ and a (semi/poly)stable point $v$
with respect to $G$ is (semi/poly)stable with respect to $H$. By
converse, the stability with respect a sufficiently wide class of
subgroups implies the stability with respect $G$. Indeed it holds
the following
\begin{thm}[Hilbert--Mumford criterion]
A point $v \in V$ is (semi/poly)stable if and only if it is
(semi/poly)stable with respect to all non--trivial
one--parameter subgroups of $G$.
\end{thm}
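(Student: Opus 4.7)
The argument splits into the easy direction (restriction) and the substantive direction (existence of a destabilizing one-parameter subgroup). For the easy direction in the semistable case it suffices to observe that if $0\in\overline{\lambda\cdot v}$ for some non-trivial one-parameter subgroup $\lambda:\mathbb{G}_m\to G$, then a fortiori $0\in\overline{G\cdot v}$. The stable case requires a slightly more careful orbit analysis: finiteness of $G_v$ forces any $\lambda$-fixed point lying on $G\cdot v$ to come from a conjugate of $\lambda$ taking values in $G_v$, contradicting non-triviality of $\lambda$; hence no $\lambda$-orbit on a stable $v$ has a limit in $V$. The polystable case is to be read as the statement that for every non-trivial $\lambda$, if $\lim_{t\to 0}\lambda(t)v$ exists in $V$ then it lies in $G\cdot v$, which is exactly the form produced by the Hilbert--Mumford machinery.

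The substantive converse is the classical Hilbert--Mumford theorem. Assume $v$ is $G$-unstable, so $0\in\overline{G\cdot v}$. The valuative criterion of properness applied to the orbit map $G\to V$ furnishes a discrete valuation ring $\mathcal{O}$ with fraction field $K$ and a morphism $\psi:\operatorname{Spec}K\to G$ such that $t\mapsto\psi(t)\cdot v$ extends across the closed point of $\operatorname{Spec}\mathcal{O}$ to the value $0$. The central technical tool is then the Iwahori--Cartan decomposition of the loop group $G(K)$, which allows one to factor $\psi=k_1\,\lambda\,k_2$ with $k_1,k_2\in G(\mathcal{O})$ and $\lambda:\mathbb{G}_m\to G$ a genuine algebraic one-parameter subgroup over $\mathbb{C}$. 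Evaluating at the closed point, the conjugated subgroup $\mu(t)=k_2(0)^{-1}\lambda(t)k_2(0)$ satisfies $\lim_{t\to 0}\mu(t)\cdot v=0$, producing the desired destabilizing one-parameter subgroup.

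The polystable and stable cases adapt this machinery. To detect that $G\cdot v$ fails to be closed, one runs the Iwahori argument on a specialization $v_0\in\overline{G\cdot v}\setminus G\cdot v$ in place of the origin, obtaining a one-parameter subgroup whose limit in $V$ lies outside $G\cdot v$. To detect infiniteness of $G_v$ for $v$ polystable but not stable, one invokes reductivity of $G_v$ (a consequence of closedness of the orbit) and extracts a non-trivial one-parameter subgroup of $G_v$, which $\lambda$-fixes $v$ and so exhibits an infinite $\lambda$-stabilizer. The main obstacle throughout is the Iwahori decomposition step itself: it is genuinely non-elementary and rests on the structure theory of reductive groups over $\mathbb{C}((t))$. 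I would quote it as a black box from Mumford--Fogarty--Kirwan and reduce the remainder to a routine unwinding of the definitions of orbit closures and limits.
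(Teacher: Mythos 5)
The paper offers no proof of this statement: it is quoted as the classical Hilbert--Mumford criterion, with the reader referred to the standard sources \cite{Mum77,Dol03,Tho06,MumFogKir94}, so there is no argument of the author's to compare yours against. Judged on its own, your outline is the standard one and is essentially correct: the easy direction by restriction of orbits, and the substantive direction via the valuative criterion applied to the orbit map followed by the Iwahori--Cartan decomposition $G(K)=G(\mathcal{O})\,T(K)\,G(\mathcal{O})$, quoted as a black box. Two points deserve more care than your sketch gives them. First, the polystable clause is not part of Mumford's original Theorem 2.1; it is the Birkes--Kempf closed-orbit criterion, and the reduction you need is that $\overline{G\cdot v}\setminus G\cdot v$ is a closed $G$-invariant subset, so that the Iwahori argument (run with this set in place of $\{0\}$) produces a $\lambda$ whose limit exists in $V$ but falls outside $G\cdot v$; your phrase ``on a specialization $v_0$'' glosses over the fact that it is this closed invariant set, not an individual point, to which the valuative criterion must be applied. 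Second, with the paper's definitions, ``polystable with respect to $\lambda$'' means the orbit $\lambda(\mathbb{C}^\times)\cdot v$ itself is closed, which is a priori stronger than ``the limit, if it exists, lies in $G\cdot v$''; you should say explicitly that for a one-parameter subgroup the boundary of the orbit consists of the two fixed-point limits, so the two formulations coincide once one also uses that $G_w$ is conjugate to $G_v$ for $w\in G\cdot v$ (the same observation that settles your stable case). With those clarifications the proposal is a faithful account of the classical proof.
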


Now let $\lambda$ be a one-parameter subgroup of $G$. Each non--zero $v \in V$ determines a line $L=[v] \in \mathbb P(V)$ and $L_0=\lim_{t \to 0} \lambda(t) \cdot L$ is $\lambda$-invariant. The weight of the action of $\lambda$ on $L_0 \subset V$  is denoted by $\mu(v,\lambda)$ and it is called the \emph{Mumford weight} of $v$ with respect to $\lambda$. Thus, by Hilbert-Mumford criterion, $v$ is (semi)stable if and only if $\mu(v,\lambda) < 0$ (resp. $\leq 0$) for all one--parameter subgroups $\lambda$ of $G$.

Next, in order to define relative stability of a point $v \in V$ consider the stabilizer $G_{[v]}=\{g \in G \,|\, g \cdot v \in \mathbb C v\}$ of $[v]$ with respect to the induced action on $\mathbb P(V)$ and choose a maximal torus $ T \subseteq G_{[v]}$, so that $ T \simeq (\mathbb C^\times)^r $ for some integer $r\geq 0$. Let $Z_G(T)$ be the identity component of the centralizer of $T$ in G and consider the quotient group $G_T = Z_G(T)/T$. Choosing a split of the exact sequence $ 0 \to T \to Z_G(T) \to G_T \to 0$ gives an action of $G_T$ on $V$ by restriction of the original one of $G$.

\begin{defn}\label{defn::relative stability}
A point $v \in V$ is called \emph{relatively (un/semi/poly)stable} if it is (un/semi/poly)stable with respect to $G_T$.
\end{defn}

Actually the inclusion $\iota : G_T \hookrightarrow G$ is not canonically defined, thus we have to show that definition \ref{defn::relative stability} is independent of $\iota$. To this end let $\kappa: G_T \to Z_G(T)$ be another split of the sequence above. Since $\iota$ and $\kappa$ are right inverses of the projection $Z_G(T) \to G_T$ then $\tau(g) = \kappa(g)\iota(g)^{-1}$ defines an homomorphism $\tau: G_T \to T$. Thus the orbit $\kappa(G_P)\cdot v$ is just a ``translation'' of the orbit $\iota(G_P) \cdot v$ via $\tau$.    

\begin{rem}
A stable point for the $G$--action is relatively stable as well. Indeed, in this case $T$ is trivial and $G_T=Z_G(T)=G$. On the other hand, relatively unstable points are unstable in absolute sense.
\end{rem}

\begin{rem}
As in the non-relative case, in presence of a polarized variety $(X,L)$ acted on by $G$, the relative stability of $p \in X$ is defined looking at the orbit of a nonzero $\ell \in L_p$ under the action of $G_T$, being $T \subseteq G_p$ a maximal torus of the stabilizer of $p$ .   
\end{rem}

\subsection{Relative stability of configurations of points}\label{sec::rel_stab_P^n}
The aim of this section is to give a geometric criterion for relative Chow--stability of configuration of points in the projective space $\mathbb P^n$. For convenience of the reader and future reference we start by recalling a similar well--known result in the absolute (i.e. non relative) Chow--stability. For proofs and more details see \cite{MumFogKir94,Dol03,Muk03}.

By {\it configuration of points} here we mean an element $P=(p_1,\ldots,p_m) \in (\mathbb P^n)^m$. 

\begin{rem}
To $P$ we can associate an ideal sheaf $\mathcal I_P \subset \mathcal O_{\mathbb P^n}$ as follows. Perhaps changing the order of $p_j$'s we can suppose that $\{p_1, \dots, p_s\}$ is the maximal set of distinct points among $p_j$'s. Denoted by $m_j$ the multiplicity of $p_j$ in $P$ for each $1\leq j\leq s$, we set $\mathcal I_P = \mathcal I_{p_1}^{m_1} \cap \dots \cap \mathcal I_{p_s}^{m_s}$. As well known the Chow--stability of the subscheme of $\mathbb P^n$ defined by the ideal sheaf $\mathcal I_P$ is equivalent to the GIT stability of the $m$-form $f_P \in \mathbb C[x_0,\dots,x_x]$ defined in the following.
\end{rem}

To each $p_j = (p_j^0:\ldots:p_j^n)$ we associate the linear form $l_j (x) = \sum_{i=0}^n p_j^i x_i$ (the so--called Chow form of $p_j$) and then consider the product $$ f_P(x) = \Pi_{j=1}^m l_j(x).$$

Now let $V=\mathbb C[x_0,\ldots,x_n]_m$ be the space of forms of degree $m$ endowed with the action of $G=SL(n+1)$ defined by $$ (g \cdot f)(x) = f(g^{-1}x), $$ for each $g \in G$. Since $f_P \in V$ we give the following

\begin{defn}
The configuration $P$ is called (semi)stable if $f_P \in V$ is.
\end{defn}

The geometric meaning of stability just defined is given by the following

\begin{thm}\label{thm::abs_sta}
Let $P \in \mathbb (P^n)^m$ a configuration of points of $\mathbb P^n$. $P$ is semistable if and only if for every proper linear subspace $E \subset \mathbb P^n$ we have $$ \# \{j \,|\, p_j \in E \}\leq \frac{\dim E +1}{n+1}m.$$ P is stable if and only if the strict inequality holds.
\end{thm}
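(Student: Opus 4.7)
The plan is to apply the Hilbert--Mumford criterion directly to the form $f_P \in V = \mathbb{C}[x_0,\ldots,x_n]_m$: $P$ is (semi)stable exactly when $\mu(f_P,\lambda) < 0$ (resp.\ $\leq 0$) for every non-trivial one-parameter subgroup $\lambda$ of $SL(n+1)$. First I would diagonalize $\lambda$: after a suitable choice of coordinates one writes $\lambda(t) = \mathrm{diag}(t^{a_0},\ldots,t^{a_n})$ with $a_0 \leq \cdots \leq a_n$ and $\sum_i a_i = 0$. The induced action on linear forms is $\lambda(t)\cdot x_i = t^{-a_i} x_i$, so for the Chow form $l_j = \sum_i p_j^i x_i$ the projective limit $\lim_{t\to 0}\lambda(t)\cdot [l_j]$ picks out the monomial of minimal weight, namely $-a_{i^*(j)}$, where $i^*(j)$ is the largest index with $p_j^{i^*(j)} \neq 0$. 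Multiplying over $j$ gives
$$ \mu(f_P,\lambda) \;=\; -\sum_{j=1}^m a_{i^*(j)}. $$

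Next I would translate indices into flag data. Setting $V_{k+1} = \mathrm{span}(e_0,\ldots,e_k)$, one has $i^*(j) \leq k$ iff $p_j \in \mathbb{P}(V_{k+1})$, a projective linear subspace of dimension $k$; writing $m_k = \#\{j : p_j \in \mathbb{P}(V_{k+1})\}$, the Mumford weight becomes $\mu(f_P,\lambda) = -\sum_{k=0}^n a_k (m_k - m_{k-1})$. As $\lambda$ varies, the ordered basis varies freely, so the flag $V_\bullet$ ranges over all complete flags of $\mathbb{C}^{n+1}$, while $(a_0,\ldots,a_n)$ ranges over the cone $C$ of non-decreasing sequences with zero sum.

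The key reduction is that $C$ is the positive cone spanned by the $n$ extreme one-parameter subgroups $\lambda^{(k)}$ ($k = 0,\ldots,n-1$), whose weight vector has $k+1$ entries equal to $-(n-k)$ followed by $n-k$ entries equal to $k+1$. Since $\lambda \mapsto \mu(f_P,\lambda)$ is linear on $C$ for fixed flag, the Hilbert--Mumford inequality for arbitrary $\lambda$ is equivalent to the same inequality for every $\lambda^{(k)}$ and every complete flag. A direct computation yields
$$ \mu(f_P,\lambda^{(k)}) \;=\; (n+1)\, m_k - (k+1)\, m, $$
so $\mu(f_P,\lambda) \leq 0$ for all $\lambda$ if and only if $m_k \leq \frac{k+1}{n+1} m$ for every $k$-dimensional projective subspace; varying the flag exhausts all proper linear subspaces of $\mathbb{P}^n$. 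The same argument with strict inequalities, combined with non-negativity of the coefficients expressing an arbitrary $\lambda$ in terms of the $\lambda^{(k)}$, delivers the stable case.

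The step I expect to be the main obstacle is the reduction to the extreme rays $\lambda^{(k)}$: one must verify both that they generate $C$ and that strictness of $\mu$ on them controls strictness on a general $\lambda \in C$. A minor side issue is passing from rational to integral one-parameter subgroups, which is harmless since the Hilbert--Mumford criterion is insensitive to positive rescalings of $\lambda$.
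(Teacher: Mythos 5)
Your proposal is correct, and it is essentially the standard argument: the paper itself does not prove Theorem \ref{thm::abs_sta} but simply cites Mukai (Prop.\ 7.27) and Dolgachev (Thm.\ 11.2), whose proofs run along exactly the lines you describe (diagonalize $\lambda$, compute the weight of $f_P$ as the sum of the weights of the Chow forms $l_j$, and reduce to a combinatorial inequality over the flag adapted to $\lambda$). Your sign conventions are consistent with the paper's: with $(g\cdot f)(x)=f(g^{-1}x)$ the monomial $x_i$ has weight $-a_i$, the limit line picks out the minimal weight, and since $\mathbb{C}[x_0,\dots,x_n]$ is a domain the lowest-weight part of $\prod_j l_j$ is the product of the lowest-weight parts, giving $\mu(f_P,\lambda)=-\sum_j a_{i^*(j)}$ as you claim.

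The one step you flag as a potential obstacle closes easily. For a fixed basis, an arbitrary weight vector $a=(a_0,\dots,a_n)$ with $a_0\le\cdots\le a_n$ and $\sum_i a_i=0$ decomposes as $a=\sum_{k=0}^{n-1}c_k\,a^{(k)}$ with explicit coefficients
$$ c_k=\frac{a_{k+1}-a_k}{n+1}\ \ge\ 0, $$
where $a^{(k)}$ is your extreme vector with $k+1$ entries $-(n-k)$ and $n-k$ entries $k+1$; this is just the expansion in fundamental coweights of $SL(n+1)$ and can be checked directly using $\sum_i a_i=0$. Since $\mu(f_P,\cdot)$ is linear in $a$ for a fixed basis, one gets $\mu(f_P,\lambda)=\sum_k c_k\,\mu(f_P,\lambda^{(k)})$, which yields the semistable case immediately, and the stable case because a nontrivial $\lambda$ has at least one $c_k>0$ while the remaining terms are $\le 0$. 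The passage between rational and integral one-parameter subgroups is indeed harmless, as the extreme rays $a^{(k)}$ are themselves integral and only the signs of the $\mu(f_P,\lambda^{(k)})$ enter. One cosmetic remark: the theorem as stated quantifies over all proper linear subspaces $E$, whereas your argument produces, for each $\lambda$, only the coordinate subspaces of the diagonalizing basis; as you note, varying the basis realizes every linear subspace as some $\mathbb{P}(V_{k+1})$, so the two quantifications agree.
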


\begin{proof} See \cite[Proposition 7.27]{Muk03} or \cite[Theorem 11.2]{Dol03}. \end{proof}

Now we turn to a relative version of the previous results.

\begin{defn}
$P$ is called relatively (semi/poly)stable if $f_P \in V$ is.
\end{defn}

To give geometric conditions characterizing relative stability of $P$, let $G_P \subseteq G$ be the identity component of the stabilizer of the configuration $P$ and let $\Lambda \subseteq \mathbb P^n$ be the $G_P$-invariant subspace spanned by points $p_1,\ldots,p_m$. Consider the action of $G_P$ on $\Lambda$ and let $F$ be the fixed points locus. $F$ is a union of subspaces, then $\Lambda$ is decomposable in a sum of pointwise $G_P$-fixed subspaces. The following lemma characterize such decomposition and it is crucial to state the condition of relative stability.

\begin{lem}\label{lem::deco}
There exists a decomposition $$\Lambda=\Lambda_1+\ldots+\Lambda_s$$ with $\{p_1,\dots,p_m\} \cap \Lambda_j \neq \emptyset $ such that
\begin{itemize}
\item $\Lambda_j \cap \sum_{\ell\neq j} \Lambda_{\ell} = \emptyset$, \emph{(orthogonality)}
\item If $\Lambda=\Lambda'_1+\ldots+\Lambda'_{s'}$ satisfies conditions above, then $s'\leq s$. \emph{(irreducibility)}
\end{itemize}
A decomposition as above is unique up to the order of summands.
\end{lem}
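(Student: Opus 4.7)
The plan is to build the $\Lambda_j$'s in two stages: first the $G_P$-weight decomposition of the affine cone over $\Lambda$, then a matroid-connectedness refinement inside each weight space. Let $W\subset\mathbb C^{n+1}$ be the linear span with $\mathbb P(W)=\Lambda$ and fix lifts $\tilde p_i\in W$. Since each $[p_i]$ is $G_P$-fixed, the line $\mathbb C\tilde p_i$ is $G_P$-stable and $G_P$ acts on it by a character $\chi_i\colon G_P\to\mathbb C^\times$. For every character $\chi$ set $V_\chi:=\{w\in W : g\cdot w=\chi(g)w\ \forall g\in G_P\}$. A standard character-independence argument makes $\sum V_\chi$ a direct sum, and because the $\tilde p_i$ span $W$ and each $\tilde p_i\in V_{\chi_i}$, one has $W=\bigoplus_{\chi\in C}V_\chi$, where $C$ is the finite set of characters that actually occur. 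Projectively, the pointwise $G_P$-fixed locus of $\Lambda$ equals the disjoint union $\bigsqcup_{\chi\in C}\mathbb P(V_\chi)$.

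Within each $V_\chi$ the vectors $\{\tilde p_i : \chi_i=\chi\}$ carry the structure of a matroid $M_\chi$ given by linear dependence; decompose $M_\chi$ uniquely into its connected components $C_{\chi,1},\dots,C_{\chi,k_\chi}$, which by the direct-sum theorem for matroids yields $V_\chi=\bigoplus_\ell \mathrm{span}(C_{\chi,\ell})$. Setting $s:=\sum_{\chi\in C}k_\chi$ and declaring $\Lambda_j:=\mathbb P(\mathrm{span}(C_{\chi,\ell}))$ after reindexing the pairs $(\chi,\ell)$, every $\Lambda_j$ is a pointwise $G_P$-fixed linear subspace (it sits inside a single $\mathbb P(V_\chi)$), contains at least one configuration point by construction, satisfies the projective orthogonality $\Lambda_j\cap\sum_{\ell\neq j}\Lambda_\ell=\emptyset$, and the $\Lambda_j$'s sum to $\mathbb P(W)=\Lambda$. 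This takes care of existence and gives a candidate $s$ that I claim is maximal.

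For irreducibility and uniqueness, let $\Lambda=\Lambda'_1+\cdots+\Lambda'_{s'}$ be another decomposition satisfying the two listed conditions. Each $\Lambda'_k$ is linear, hence connected, and pointwise fixed by $G_P$, so it is contained in a single $\mathbb P(V_{\chi(k)})$, with $\chi(k)$ equal to the common character of any $p_i\in\Lambda'_k$. Orthogonality forces the lifts of the $\Lambda'_k$'s sharing a given character $\chi$ to be in direct sum inside $V_\chi$, and the identity $\sum_k\Lambda'_k=\Lambda$ makes them span $V_\chi$; in particular every $p_i$ of character $\chi$ lies in exactly one $\Lambda'_k$, so we obtain a partition of the ground set of $M_\chi$ whose blocks have linearly independent spans. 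The classical maximality/uniqueness statement for the connected-component decomposition of a matroid (any direct-sum partition of the ground set must coarsen the connected-component partition) then yields $s'\leq s$, with equality precisely when this partition coincides with $\{C_{\chi,\ell}\}_\ell$, i.e.\ when $\{\Lambda'_k\}$ and $\{\Lambda_j\}$ agree up to reordering. The main obstacle I anticipate is the step where one forces each configuration point to lie in a single $\Lambda'_k$: it requires combining the spanning condition with the pointwise-fixed hypothesis weight space by weight space, after which the matroid-theoretic input closes the argument cleanly.
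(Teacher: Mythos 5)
Your construction produces exactly the paper's decomposition, by a more intrinsic route. The paper chooses coordinates so that $d=\dim\Lambda+1$ of the configuration points become coordinate points, attaches to each $p_j$ the coordinate subspace $S_j$ spanned by its support, and groups the $S_j$ by the equivalence relation generated by $S_i\cap S_j\neq\emptyset$; this is precisely the computation of the connected components of the vector matroid via fundamental circuits with respect to that basis, so the paper's $\Lambda_j$'s are the projectivized spans of the matroid components. Your extra weight-space layer is consistent with this: since each lift $\tilde p_i$ is a weight vector, every circuit of the configuration lies in a single $V_\chi$, so the components refine the character partition and your $\Lambda_j$'s coincide with the paper's. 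Note that the $G_P$-action plays no role in the paper's construction at all, and in yours it is redundant for existence; its only job is in your uniqueness step. The advantage of your formulation is that it is visibly independent of the choice of basis, a point the paper never addresses.

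The gap is exactly the one you flag, and it cannot be repaired from the stated hypotheses: spanning plus orthogonality does not force a configuration point to lie in some $\Lambda'_k$, and the lemma as literally written is false. For three points $[1:0]$, $[0:1]$, $[1:1]$ spanning a line, the component decomposition has $s=1$, yet $\Lambda=\{p_1\}+\{p_2\}$ satisfies every listed condition with $s'=2$, so neither maximality nor uniqueness can hold as stated. The missing hypothesis --- used implicitly in theorem \ref{thm::rel_sta}, where the $P_j$'s must partition $P$ --- is that every $p_i$ lies in some $\Lambda_j$. Once it is added, your argument closes: orthogonality gives that each point lies in at most one summand, hence exactly one; since the spans of the resulting groups $P_k$ are in direct sum and already sum to $\Lambda$, a dimension count forces $\Lambda'_k=\mathrm{span}(P_k)$; and the matroid fact that any rank-additive partition of the ground set coarsens the partition into connected components gives $s'\leq s$, with equality exactly for the component partition. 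For what it is worth, the paper's own proof asserts orthogonality and irreducibility ``by construction'' and offers no argument for maximality or uniqueness, so on this point your attempt is more complete than the source, not less.
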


\begin{proof}
We prove the statement by constructing such a decomposition.

Let $d-1$ be the dimension of $\Lambda$. By definition of $\Lambda$ it is possible to choose $d$ points linear independent among $p_1,\ldots,p_m$. Without loss we can suppose that $p_1,\ldots,p_d$ are independent. Furthermore, with a suitable choice of projective coordinates, we can also suppose $p_j=e_{j-1}$ (the canonical points of $\mathbb P^n$ in the fixed coordinates) for $1 \leq j \leq d$.

To each point $p_j = (p_j^0: \ldots :p_j^n) \in \{p_1,\ldots,p_m\}$ we associate the subspace $$S_j = \sum_{i \mbox{ s.t. } p_j^i\neq 0} \{ e_i\}.$$ Clearly $p_j \in S_j$ and the number of non--zero coefficient of $p_j$ with respect to the canonical basis equals $\dim S_j$. In particular $S_j=\{ e_{j-1} \}$ for $j \in \{1,\dots,d\}$. Now we introduce on the set of subspaces $\Sigma =\{S_1,\dots,S_m\}$ the equivalence relation defined by $$ S_i \sim S_j \quad \iff \quad S_i \cap S_j \neq \emptyset$$ and we associate to each class $C_i \in \{C_1,\dots,C_s\} = \Sigma/\sim$ the subspace $$\Lambda_i = \sum_{S_j \in C_i} S_j.$$ By construction the decomposition $$\Lambda = \Lambda_1 + \dots + \Lambda_s$$ satisfies orthogonality and irreducibility condition.
\end{proof}

\begin{ex}[Plane configurations of points]\label{ex::plane_configs}
Dealing with points on the plane only few different cases can occur. Let $P \in (\mathbb P^2)^m$ and $\Lambda$ be the span of points of $P$. One has the following cases

\vspace{3mm}
\begin{tabular}{ll}
$\Lambda = \{p\}$                & if $P$ is supported at the point $p$.\\
$\Lambda = \{p\}+\{q\}$          & if $P$ is supported at two distinct points $p$, $q$.\\
$\Lambda = \{p\}+\{q\}+\{r\}$    & if $P$ is supported at three non--aligned points $p$, $q$, $r$. \\
$\Lambda = L$                    & if $P$ is supported at least at three\\
                                 & distinct points of the line $L$.\\
$\Lambda = \{p\}+L$              & if $P$ is supported at the point $p \notin L$ and \\
                                 & at least three distinct points of the line $L$.\\
$\Lambda = \mathbb P^2$          & if $P$ is supported at least at four points,\\
                                 & three by three not aligned.
\end{tabular}
\end{ex}

Now we are in position to state and prove the main result of this section

\begin{thm}\label{thm::rel_sta}
Let $P \in \mathbb (\mathbb P^n)^m$ be a configuration of points in $\mathbb P^n$. Let $\Lambda = \Lambda_1 + \dots + \Lambda_s$ be the subspace spanned by points of $P$ with the decomposition of lemma \ref{lem::deco}. For each $j \in \{1,\dots,s\}$ let $P_j$ be the configuration of points of $P$ contained in $\Lambda_j$.

The configuration $P$ is relatively (semi/poly)stable if and only if each configuration $P_j$ is (semi/poly)stable in $\Lambda_j$.
\end{thm}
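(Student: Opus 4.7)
The plan is to reduce relative (semi/poly)stability of $f_P \in V = \mathbb{C}[x_0,\ldots,x_n]_m$ under $G = SL(n+1)$ to absolute (semi/poly)stability of each factor $f_{P_j}$ under $SL(W_j)$, by explicitly identifying a maximal torus $T$ of $G_{[f_P]}^0 = G_P$, computing $G_T = Z_G(T)/T$, and decomposing the Mumford weight. I would work in coordinates of $W = \mathbb{C}^{n+1} = W_0 \oplus W_1 \oplus \cdots \oplus W_s$ adapted to the decomposition of Lemma \ref{lem::deco}, where $W_j$ is the affine cone over $\Lambda_j$ for $j \geq 1$ and $W_0$ is a linear complement of $\Lambda$ in $W$. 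Any element of $T$ is diagonalizable and projectively fixes each $p_i$, so each $p_i$ must be a weight vector of $T$. If $T$ had non-trivial weights on some $W_j$ with $j \geq 1$, the points of $P_j$ would distribute into distinct weight spaces, yielding a refinement $W_j = \bigoplus_\chi \mathrm{span}(P_j \cap W^{(\chi)})$ and contradicting the irreducibility clause of Lemma \ref{lem::deco}. Hence a maximal $T$ acts by a single scalar $t_j$ on each $W_j$ with $j \geq 1$ and as an arbitrary diagonal $t_0$ on $W_0$, subject to $\det(t_0)\,\prod_{j\geq 1} t_j^{\dim W_j} = 1$.

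Since $T$ contains an element whose eigenvalues on the weight lines of $W_0$ and on each $W_j$ with $j \geq 1$ are pairwise distinct, every element of $Z_G(T)$ preserves each summand, and one obtains
$$
Z_G(T) = \Bigl( T_0 \times \prod_{j\geq 1} GL(W_j) \Bigr) \cap SL(W)
$$
with $T_0$ a maximal torus of $GL(W_0)$. The natural embedding $\iota : \prod_{j\geq 1} SL(W_j) \hookrightarrow Z_G(T)$ splits the exact sequence $0 \to T \to Z_G(T) \to G_T \to 0$ up to a finite central subgroup (immaterial for GIT stability), so I identify $G_T$ with $\prod_{j\geq 1} SL(W_j)$. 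A non-trivial 1-PS of $G_T$ is then a tuple $\lambda = (\lambda_1,\ldots,\lambda_s)$ with each $\lambda_j$ a 1-PS of $SL(W_j)$. Since the linear form $l_i$ corresponding to $p_i \in \Lambda_{j(i)}$ depends only on coordinates of $W_{j(i)}$ while $\lambda_k$ acts trivially on $W_\ell$ for $k \neq \ell$, the factorization $f_P = \prod_{j=1}^s f_{P_j}$ yields the additive Mumford weight formula
$$
\mu(f_P,\lambda) = \sum_{j=1}^s \mu(f_{P_j}, \lambda_j),
$$
where $\mu(f_{P_j},\lambda_j)$ is the Mumford weight of $f_{P_j} \in S^{|P_j|}(W_j^*)$ under $SL(W_j)$. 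Applying Hilbert--Mumford to $G_T$, the implication \emph{each $f_{P_j}$ (semi)stable implies $f_P$ relatively (semi)stable} follows by summing weight inequalities, and the converse by choosing $\lambda$ supported on a single factor $SL(W_{j_0})$. For the polystable case one uses that $f_P$ factors as a product of forms in pairwise disjoint variable groups, so the $G_T$-orbit of $f_P$ equals the product of the individual $SL(W_j)$-orbits of the $f_{P_j}$ and closedness decouples.

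The main obstacle I expect is the first step, namely pinning down the block-scalar normal form of $T$ from Lemma \ref{lem::deco}. Once this is established, the centralizer computation, the identification of $G_T$ with $\prod_{j\geq 1} SL(W_j)$, and the additive Mumford weight formula are essentially formal manipulations, and the theorem reduces to Hilbert--Mumford applied separately to each $SL(W_j)$ factor.
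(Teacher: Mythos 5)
Your proposal is correct and follows essentially the same route as the paper: choose coordinates adapted to the decomposition of Lemma \ref{lem::deco}, show the maximal torus $T$ of the stabilizer acts by a single scalar on each $\Lambda_j$, compute $Z_G(T)$ as block matrices, identify $G_T$ with $\prod_j SL(d_j)$ acting on the factorization $f_P = \prod_j f_{P_j}$ into forms in disjoint variable groups, and decouple the (semi/poly)stability factor by factor. Your weight-space argument pinning down the block-scalar form of $T$ from the irreducibility clause, and your phrasing of the semistable case via additivity of Mumford weights rather than orbit closures, are minor refinements of the same argument, not a different approach.
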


\begin{proof}
First of all we have to determine the (identity component of the) stabilizer $G_P$ of $P$ in $G=SL(n+1)$. To this end is useful to make some assumption on points of $P=(p_1,\dots,p_m)$. As in the proof of lemma \ref{lem::deco} we can suppose without loss that $p_j=e_{j-1}$ for $1 \leq j \leq d = \dim \Lambda +1$. Moreover, since each $\Lambda_j$ is spanned by some $e_i$'s, perhaps after some exchanges, we can also assume \begin{eqnarray*} \Lambda_1 & = & \{e_0\} + \dots + \{e_{d_1 - 1}\}, \\ \Lambda_2 & = & \{e_{d_1}\} + \dots + \{e_{d_1 + d_2 -1}\}, \\ & \vdots & \\ \Lambda_s & = & \{e_{d_1+ \dots + d_{s-1} }\} + \dots + \{e_{d_1+ \dots + d_s-1}\}, \end{eqnarray*}
where $d_j = \dim \Lambda_j +1$, with $\sum_{j=1}^s d_j = d$. By irreducibility property of decomposition $\Lambda = \Lambda_1 + \dots + \Lambda_s$, if $g \in G_P$ then $g$ fixes each point of $\Lambda_j$ for any $j \in \{1,\dots,s\}$. Thus, in the basis $\{e_0,\dots,e_n\}$ any element $g \in G_P$ is represented by a matrix of the form $$g =
\left(
\begin{array}{ccc|p{5mm}|}
\cline{1-1} \cline{4-4}
\multicolumn{1}{|c|}{\lambda_1 I_1} &        &               &     \\
\cline{1-1}
                                    & \ddots &               & \multicolumn{1}{c|}{C}  \\
\cline{3-3}
                                    &        & \multicolumn{1}{|c|}{\lambda_s I_s} &     \\
\cline{3-4}
                                    &        &               & \multicolumn{1}{c|}{B} \\
\cline{4-4}
\end{array}
\right)
$$
where $\lambda_j \in \mathbb C^\times$, $I_j$ is the identity matrix
of rank $d_j$, $ C \in {\rm Mat}(d\times
(n-d+1),\mathbb C)$ and $B \in GL(n-d+1)$ satisfies the
condition $$ \det(B) = \frac{1}{\lambda_1^{d_1} \dots
\lambda_s^{d_s}}.$$ With the most natural choice of the maximal torus $T \subseteq G_P$ a element $t \in T$ is represented by a matrix of the form
$$t =
\left(
\begin{array}{cccccc}
\cline{1-1}
\multicolumn{1}{|c|}{\lambda_1 I_1} &        &               &    &    & \\
\cline{1-1}
                                    & \ddots &               &    &    & \\
\cline{3-3}
                                    &        & \multicolumn{1}{|c|}{\lambda_s I_s} &    &    & \\
\cline{3-6}
                                    &        &               & \multicolumn{1}{|c}{\lambda_{s+1}} & & \multicolumn{1}{c|}{} \\
                                    &        &               & \multicolumn{1}{|c}{} & \ddots & \multicolumn{1}{c|}{} \\
                                    &        &               & \multicolumn{1}{|c}{} & & \multicolumn{1}{c|}{\lambda_{\rho+1}} \\
\cline{4-6}
\end{array}
\right)
$$
where $I_j$ are as above, $\lambda_j \in \mathbb C^\times$ with the
condition $$ \lambda_{s+1}\dots \lambda_{\rho+1} =
\frac{1}{\lambda_1^{d_1} \dots \lambda_s^{d_s} },$$ and $\rho= n-d+s
$ is the dimension of $T$. Thus, a generic element $g \in Z_G(T)$ of the centralizer of $T$ in $G$ is represented by
\begin{equation}\label{eqn::el_centr}
g =
\left(
\begin{array}{cccccc}
\cline{1-1}
\multicolumn{1}{|c|}{A_1} &        &               &    &    & \\
\cline{1-1}
                                    & \ddots &               &    &    & \\
\cline{3-3}
                                    &        & \multicolumn{1}{|c|}{A_s} &    &    & \\
\cline{3-6}
                                    &        &               & \multicolumn{1}{|c}{\mu_{s+1}} & & \multicolumn{1}{c|}{} \\
                                    &        &               & \multicolumn{1}{|c}{} & \ddots & \multicolumn{1}{c|}{} \\
                                    &        &               & \multicolumn{1}{|c}{} & & \multicolumn{1}{c|}{\mu_{\rho+1}} \\
\cline{4-6}
\end{array}
\right)
\end{equation}
where $\mu_j \in \mathbb C^\times$ and $A_j \in {\rm GL}(d_j)$ satisfy the condition \begin{equation}\label{eqn::SL_Ak} \det(A_1) \dots \det(A_s) = \frac{1}{\mu_{s+1}\dots\mu_{\rho+1}}.\end{equation}
Now we define $\iota: G_T=Z_G(T)/T \to G$ by $$ \iota(gT) =
\left(
\begin{array}{cccp{5mm}}
\cline{1-1} 
\multicolumn{1}{|c|}{B_1} &        &               &     \\
\cline{1-1}
                                    & \ddots &               &  \\
\cline{3-3}
                                    &        & \multicolumn{1}{|c|}{B_s} &     \\
\cline{3-4}
                                    &        &               & \multicolumn{1}{|c|}{I} \\
\cline{4-4}
\end{array}
\right)
$$
where $I$ is the identity matrix of rank $\rho-s+1$ and $B_j = (\det A_j)^{-1}A_j \in SL(d_j)$. 

Under our assumptions the homogeneous form $f_P \in \mathbb
C[x_0,\dots,x_n]_m$ splits $$ f_P = f_1 \dots f_s$$ where $f_j \in
\mathbb C[x_{d_1 + \dots + d_{j-1}}, \dots, x_{d_1 + \dots + d_j
-1}]$. Thus the action of an element $g \in Z_G(T)$ of the form
\eqref{eqn::el_centr} splits into the action of the blocks $B_j$ on
$f_j$: \begin{equation}\label{eqn::split_act} g \cdot f_P = (B_1
\cdot f_1) \dots (B_s \cdot f_s). \end{equation} 

After all these reductions we are ready to show the ties between the absolute stability of $f_j$'s and the relative stability of $f_P$. Let us suppose that $f_j$ is unstable for the action of $SL(d_j)$. This means that the null form $0$ is contained in the closure of the orbit $SL(d_j) \cdot f_j$. Since $SL(d_j) \subseteq \iota(G_T)$ also $f_P$ is unstable. By converse, if every $f_j$ is semistable then the closure of each orbit $SL(d_j) \cdot f_j$ does not contain the null form $0$. This implies that the orbit $\iota(G_T) \cdot f_P$ of $f_P$ does not contain the null form $0$ and $f_P$ is semistable. Next we pass to polystability. By splitting \eqref{eqn::split_act} follows that the orbit $\iota(G_T) \cdot f_P$ is the pointwise product of orbits $SL(d_j) \cdot f_j$, thus it is closed if and only if each orbit $SL(d_j) \cdot f_j$ is closed. This implies that $f_P$ is relatively polystable if and only if each $f_j$ is polystable for the $SL(d_j)$--action. Finally, since the stabilizer of $f_P$ in $\iota(G_T)$ is notingh but the direct product of stabilizers in $SL(d_j)$  of each $f_j$, then $f_P$ is relatively stable if and only if each $f_j$ is stable under the action of $SL(d_j)$.
\end{proof}

\begin{proof}[proof of theorem \ref{thm::rel_stab_P2}]
It is a corollary of theorem \ref{thm::rel_sta} via example \ref{ex::plane_configs}. In case \ref{thm::rel_stab_P2;few_points}. each $\Lambda_j$ is a point, thus $N$ restricted to it is always stable. In cases \ref{thm::rel_stab_P2;line}. and \ref{thm::rel_stab_P2;line+point}. $\Lambda$ is either the line $L$ or the sum of $L$ with a point; in any cases the stability of $N$ restricted to $L$ is equivalent to the relative stability of $N$. Finally, in case \ref{thm::rel_stab_P2;many_nonal_points}. $\Lambda=\mathbb P^2$, thus relative stability and stability coincide. 
\end{proof}


\section{CM and CW--lines}\label{sec::CM&CW}

In this section we recall basic facts on CM--line, originally defined in \cite{PauTia06} (for a nice review and some interesting positivity results see also \cite{FinRos06}). Moreover we introduce the CW--line, related to Chow--stability of subschemes.

Let $\pi: X \to B$ be a flat morphism of projective varieties with $B$ irreducible and relative dimension $\dim(X/B) = n$ and let $L$ be a relatively ample line bundle $X$. For each $k$ consider the coherent sheaf $\pi_!( L^k )$ on $B$. By standard theory the Hilbert polynomial $\chi(X_b,L_b^k)$ of the fiber of $X_b=\pi^{-1}(b)$ equipped with the polarization $L_b=L|_{X_b}$ is independent of $b \in B$ and is given by \begin{equation}\label{eq::rk_av} {\rm rank}\, \pi_! (L^k) = \sum_{j=0}^n a_jk^{n-j}, \end{equation} where $a_j \in \mathbb Q$.   

Analogously, thanks to Knudsen and Mumford results \cite{KnuMum76} we have the \textit{polynomial expansion} \begin{equation}\label{eq::det_av} \det \pi_! (L^k) = \bigotimes_{j=0}^{n+1} \nu_j^{k^{n+1-j}},\end{equation} where $\nu_j$'s are $\mathbb Q$-line bundles on $B$. The relevance of $\det \pi_! (L^k)$ to our aims rely on the canonical isomorphism $$ \left( \det \pi_! (L^k) \right)_b \simeq \det H^0(X_b,L_b^k), $$ for each $b \in B$ and $k \gg 0$. 
   
\begin{defn}[\cite{PauTia06}]\label{def::CMline}
The CM--line associated to the polarized family $(X/B,L)$ is the $\mathbb Q$-line bundle on $B$ given by $$ \lambda_{\rm CM}(X/B,L) = \left( \nu_0^{-a_1} \otimes \nu_1^{a_0}\right)^{\frac{1}{a_0^2}}. $$ 
\end{defn}

We remark that the CM--line just defined is a rational multiple of the original one in \cite{PauTia06}. On the other hand the CM--line as defined in \ref{def::CMline} enjoy two nice properties. First it is homogeneous of degree zero as function of $L$, in other words $$ \lambda_{\rm CM}(X/B,L^r) = \lambda_{\rm CM}(X/B,L) $$ for all $r$. Second, in presence of a $\mathbb C^\times$-action, the weight over the invariant points of $B$ is the generalized Futaki invariant defined by Donaldson \cite{Don02}. More precisely, suppose given $\mathbb C^\times$-actions on $X$ and $B$ making $\pi$ an equivariant map and choose a linearization on $L$. From these data, we get a linearization on $\lambda_{\rm CM}(X/B,L)$. Is not difficult to see that such linearization is independent of the one on $L$. If $b_0 \in B$ is a fixed point let $F_1(b_0)$ be the weight of the action on the fiber $(\lambda_{\rm CM}(X/B,L))_{b_0}$. On the other hand, taking the fiber of $\pi$ over $b_0$, we get a polarized scheme $(X_{b_0},L_{b_0})$ endowed with a $\mathbb C^\times$-action and a linearization on $L_{b_0}$. Denoted by $w(X_{b_0},L_{b_0}^k)$ the weight of the induced action on $\det H^0(X_{b_0},L_{b_0}^k)$, we have the following 

\begin{thm}[Paul--Tian \cite{PauTia06}]\label{thm::PT_Fut} $$ \frac{w(X_{b_0},L_{b_0}^k)}{k \, \chi(X_{b_0},L_{b_0}^k)} = F_0(b_0) + F_1(b_0)k^{-1} + O(k^{-2}),\qquad k \gg 0.$$ \end{thm}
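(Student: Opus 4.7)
The plan is to take the Knudsen--Mumford polynomial expansion \eqref{eq::det_av} of $\det \pi_!(L^k)$, restrict it to the fixed fiber at $b_0$, and read off the induced weights. Since $L$ is $\pi$-ample, for $k \gg 0$ the higher direct images $R^i\pi_* L^k$ vanish, so $\pi_!(L^k) = \pi_*(L^k)$ is locally free on $B$ and the natural base-change isomorphism
\[
\bigl(\det \pi_!(L^k)\bigr)_{b_0} \;\simeq\; \det H^0(X_{b_0},L_{b_0}^k)
\]
is $\mathbb C^\times$-equivariant. Consequently the weight of the $\mathbb C^\times$-action on the left-hand side, obtained by summing the weights on the fibers $(\nu_j)_{b_0}$ in the decomposition \eqref{eq::det_av}, must equal $w(X_{b_0},L_{b_0}^k)$.

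First, I would set $w_j := \mathrm{wt}\bigl((\nu_j)_{b_0}\bigr)$ for $j=0,1,\dots,n+1$; these are rational numbers (since the $\nu_j$ are $\mathbb Q$-line bundles) determined by the induced linearization on $\lambda_{\rm CM}(X/B,L)$ and its building blocks. Taking weights in \eqref{eq::det_av} at the fixed point $b_0$ gives the polynomial identity
\[
w(X_{b_0},L_{b_0}^k) \;=\; \sum_{j=0}^{n+1} w_j\, k^{n+1-j} \;=\; w_0 k^{n+1} + w_1 k^n + O(k^{n-1}),
\]
valid for $k \gg 0$. Simultaneously, \eqref{eq::rk_av} yields $k\,\chi(X_{b_0},L_{b_0}^k) = a_0 k^{n+1} + a_1 k^n + O(k^{n-1})$.

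Next I would divide these two expansions and expand the quotient as a formal series in $k^{-1}$:
\[
\frac{w(X_{b_0},L_{b_0}^k)}{k\,\chi(X_{b_0},L_{b_0}^k)} \;=\; \frac{w_0}{a_0} \;+\; \frac{a_0 w_1 - a_1 w_0}{a_0^{2}}\, k^{-1} \;+\; O(k^{-2}).
\]
Setting $F_0(b_0) := w_0/a_0$, the coefficient of $k^{-1}$ is precisely the weight of $\nu_0^{-a_1} \otimes \nu_1^{a_0}$ at $b_0$ divided by $a_0^2$, i.e.\ the weight of $\lambda_{\rm CM}(X/B,L)$ at $b_0$ as defined in Definition \ref{def::CMline}; this is $F_1(b_0)$ by hypothesis.

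The only real subtlety is justifying that \eqref{eq::det_av} is an equivariant decomposition, so that weights on the fibers add up as expected and the $w_j$'s are well-defined rational numbers. This follows because Knudsen--Mumford's construction is functorial with respect to isomorphisms of the base data, hence canonical, hence equivariant; alternatively one can invoke the fact, already implicit in the definition of the linearization of $\lambda_{\rm CM}$, that the individual $\nu_j$ inherit $\mathbb C^\times$-linearizations which are independent of the chosen linearization on $L$ up to the overall $a_0$-th power cleared by the $1/a_0^2$ in Definition \ref{def::CMline}. Once this equivariance is in hand, the rest of the proof is the elementary series expansion carried out above.
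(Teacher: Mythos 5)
Your argument is correct: passing to weights in the Knudsen--Mumford expansion \eqref{eq::det_av} at the fixed point $b_0$, using cohomology-and-base-change to identify $\bigl(\det\pi_!(L^k)\bigr)_{b_0}$ with $\det H^0(X_{b_0},L_{b_0}^k)$ for $k\gg 0$, and then dividing the two polynomial expansions is exactly what makes the $k^{-1}$ coefficient equal to $\frac{1}{a_0^2}\,\mathrm{wt}\bigl(\nu_0^{-a_1}\otimes\nu_1^{a_0}\bigr)_{b_0}=F_1(b_0)$ as in Definition \ref{def::CMline}. Note that the paper itself supplies no proof of this statement --- it is imported verbatim from Paul--Tian \cite{PauTia06} --- but your derivation is the standard one behind that reference and is the same weight-taking of the expansions \eqref{eq::rk_av} and \eqref{eq::det_av} that the paper relies on in Section \ref{sec::CM_blup}; the one point you rightly flag, equivariance of the Knudsen--Mumford decomposition so that each $\nu_j$ carries a well-defined induced linearization, is indeed the only nontrivial input and follows from the functoriality of the determinant-of-cohomology construction.
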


\begin{rem}
With our signs conventions, $(X_{b_0},L_{b_0})$ is ${\rm CM}$-unstable w.r.t. a given $\mathbb C^\times$--action on $(X/B,L)$ if the Futaki invariant $F_1(b_0)$ is {\it positive}. 
\end{rem}

\bigskip

Now we pass to define the CW--line. Let $Y$ be a closed subscheme of $X$ flat over $B$. In other words we require that $\pi|_Y:Y \to B$ is flat or equivalently, that $\mathcal I_Y$ is flat over $B$, being $\mathcal I_Y \subset \mathcal O_X$ the ideal sheaf of $Y$. By \cite[Proposition 2.1]{Mum77} there is a polynomial expansion \begin{equation}\label{eq::rk_sub} {\rm rank}\, \pi_! \left( L^h/\mathcal I_Y^k L^h \right) = \sum_{i=0}^d \sum_{j=0}^{n-d} b_{i,j} h^{d-i} k^{n-d-j}, \end{equation} where $b_{i,j} \in \mathbb Q$. Moreover, arguing as in the proof of \cite[Proposition 2.1]{Mum77}, and using Cartier's theorem \cite{Car60} in place of Snapper, we get the following expansion \begin{equation}\label{eq::det_sub} \det \pi_!(L^{h}/\mathcal I_Y^k L^{h}) = \bigotimes_{i=0}^{d+1} \bigotimes_{j=0}^{n-d} \rho_{i,j}^{h^{d+1-i}k^{n-d-j}}, \end{equation} where $\rho_{i,j}$ are fixed $\mathbb Q$-line bundles on $B$ and $d = \dim(Y/B)$ is the relative dimension of $Y$ over $B$. 

\begin{defn}\label{def::CWline}
The CW--line associated to the closed subscheme $Y \subset (X/B,L)$ is the $\mathbb Q$-line bundle on $B$ given by $$ \lambda_{\rm CW}(Y,X/B,L) = \left( \nu_0^{b_{0,1}} \otimes \rho_{0,1}^{-a_0}\right)^{\frac{1}{a_0^2}}. $$ 
\end{defn}

Given $\mathbb C^\times$-actions on $B$ and $X$ making $Y$ invariant and $\pi$ equivariant, after choosing a linearization on $L$ we get an induced linearization on $\lambda_{\rm CW}(Y,X/B,L)$. This linearization is natural, in the sense that it is independent of the one on $L$. Pick a $\mathbb C^\times$-fixed point $b_0 \in B$ and consider the fiber $Y_{b_0}=\pi|_Y^{-1}(b_0)$

\begin{defn}
For each $b\in B$ such that $\lim_{t\to 0} t \cdot b = b_0$, the {\it Chow weight} $ w_{\rm CW}(Y_b,X_b,L_b)$ of $Y_b=\pi|_Y^{-1}(b)$ w.r.t. the given $\mathbb C^\times$-action is the weight of the induced $\mathbb C^\times$-action on the fiber $\left( \lambda_{\rm CW}(Y,X/B,L) \right)_{b_0}$.
\end{defn}

\begin{rem}\label{rem::CW_weight}
The Chow weight just introduced is a generalization of the well-known Chow-Mumford weight of a projective variety $N^d \subset \mathbb P^n$ \cite{Mum77,RosTho07}. To see this, fix a one parameter subgroup of $\alpha: \mathbb C^\times \to SL(n+1)$. With a suitable choice of coordinates we have $\alpha(t) =  {\rm diag}\, (t^{q_0-p},\dots,t^{q_n-p}), $ with $p=\sum_{i=0}^n \frac{q_i}{n+1}$ and $0\leq q_0 \leq \dots \leq q_n$. Fix on the hyperplane bundle $\mathcal O_{\mathbb P^n}(1)$ the linearization of $\alpha$ that induces the $\mathbb C^\times$-action $t \mapsto {\rm diag}\, (t^{-q_0}, \dots, t^{-q_n})$ on $H^0(\mathbb P^n, \mathcal O_{\mathbb P^n}(1))$. Now let $X= \mathbb P^n \times \mathbb C$ acted on diagonally by $\mathbb C^\times$. Clearly the projections on the factors are equivariant and flat, thus the pull-back $L={\rm pr}_1^*\mathcal O_{\mathbb P^n}(1)$ is a linearized line bundle on $X$. Let $\pi: X \to \mathbb C$ be the projection on the second factor. By general theory recalled above, for $k \gg 0$ we have $$ {\rm rank}\, \pi_!(L^k) = \dim H^0(\mathbb P^n, \mathcal O_{\mathbb P^n}(k)) = \frac{k^n}{n!} + O(k^{n-1})$$ and $$ \det \pi_!(L^k) = \nu_0^{k^{n+1}} \otimes O(k^n),$$ whence by $ \left(\det \pi_!(L^k)\right)_0 \simeq H^0(\mathbb P^n, \mathcal O_{\mathbb P^n}(k)),$ we can conclude that the weight of the induced $\mathbb C^\times$-action on the fiber of $\nu_0$ over $t=0$ is given by $ \sum_{i=0}^n -\frac{q_j}{(n+1)!}$. 

 Now consider the closure of the trajectory of $N$ under the action of $\alpha$ $$Y = \overline{ \left\{ (p,t) \in \mathbb P^n \times \mathbb C^\times \,|\, \alpha(t^{-1}) (p) \in N \right\}} \subset X $$ and denote its ideal sheaf with $\mathcal I_Y \subset \mathcal O_X$. By theory above, for $h \gg 0$ we have asymptotic expansions $$ {\rm rank}(L^h/\mathcal I_{Y}^k L^h) = b_0(k)h^d + O(h^{d-1}) $$ and $$ \det \pi_!(L^h/\mathcal I_Y^k L^h) = \rho_0(k)^{h^{d+1}} \otimes O(h^d),$$ thus $b_0(k)$ is the degree of $(N,\mathcal I_N^k) \subset \mathbb P^n$ and the weight $w_0(k)$ of the induced action on the fiber over $t=0$ of the $\mathbb Q$-line bundle $\rho_0(k)$ is the leading coefficient of the polynomial expansion of $\det H^0(\mathbb P^n, \mathcal O_{\mathbb P^n}(h) / (\mathcal I_N^k)_0 \otimes \mathcal O_{\mathbb P^n}(h))$ as $h \gg 0$, where $(\mathcal I_N^k)_0$ is the ideal sheaf of the flat limit $\lim_{t \to 0} \alpha(t)(N,\mathcal O_X/\mathcal I_N^k)$. Since the multiplicity of $[N_{red}]$ in the cycle underlying the scheme $(N,\mathcal I_N^k)$ is  $\binom{n-d+k-1}{k-1}$, by \cite[Lemma 25]{Wan04} we get $$ b_0(k) = \frac{\deg(N)}{d!(n-d)!} k^{n-d} + \frac{\deg(N)}{2\,d!(n-d-2)!}k^{n-d-1} + O(k^{n-d-2}),$$ $$ w_0(k) = \frac{-e(N)}{(d+1)!(n-d)!}k^{n-d} + \frac{-e(N)}{2\,(d+1)!(n-d-2)!}k^{n-d-1} + O(k^{n-d-2}), $$ being $-\frac{e(N)}{(d+1)!}$ the leading coefficient as $h \gg 0$ of the weight of the induced action on $\det H^0(\mathbb P^n, \mathcal O_{\mathbb P^n}(h) / (\mathcal I_N)_0 \otimes \mathcal O_{\mathbb P^n}(h))$.

Thus the weight of the induced action on the fiber over $t=0$ of $\lambda_{\rm CW}(Y,X/\mathbb C,L)=\left( \nu_0^{-b_{0,1}} \otimes \rho_{0,1}^{a_0}\right)^{\frac{1}{a_0^2}} $ is given by $$ w_{\rm CW}(N,\mathbb P^n,\mathcal O_{\mathbb P^n}(1) ) = \frac{n!}{2(d+1)!(n-d-2)!}\left( e(N) - \frac{(d+1) \deg(N)}{n+1} \sum_{i=0}^n q_j \right)$$ which coincides with the Chow-Mumford weight appearing in \cite[Theorem 2.9]{Mum77} up to the factor $\frac{n!}{2(d+1)!(n-d-2)!}$.
\end{rem}

\begin{rem}
With our signs conventions, a variety $N \subset \mathbb P^n$ is Chow-unstable w.r.t. a given $\mathbb C^\times$--action on $\mathbb P^n$ if the Chow--weight $\omega_{\rm CW} (N,\mathbb P^n,\mathcal O_{\mathbb P^n}(1))$ is {\it positive}. 
\end{rem}

\begin{rem}\label{rem::CW_subvar}
We can slightly generalize the situation above and consider $N\subset M$, where $(M,A)$ is a $n$-dimensional polarized manifold endowed with a $\mathbb C^\times$-action $\alpha : \mathbb C^\times \to {\rm Aut}(M)$ that linearizes on $A$, and $N$ is a possibly non-reduced sub-variety. In this case $X=M \times \mathbb C$, $\pi = {\rm pr}_2$ and $L = {\rm pr}_1^*A$. Moreover $$Y = \overline{ \left\{ (p,t) \in M \times \mathbb C^\times \,|\, \alpha(t^{-1}) (p) \in N \right\}} \subset X $$ and $N_0 = \pi|_Y^{-1}(0)$ is the flat limit of $N$ under the action of $\alpha$. Let $a_0(M,A)$ and $a_0(N,A|_N)$ be respectively the leading coefficient of $h^0(M,A^r)$ and $h^0(N,A|_N^r)$ as $r \gg 0$ and let $e(M,A)$, $e(N_0,L|_{N_0})$ be the leading coefficients of polynomial expansions of the total weights of induced actions on $ H^0(M,A^r)$ and $ H^0(N_0,L|_{N_0}^r)$ as $r \gg 0$. Arguing as above we get \begin{equation}\label{eq::Chow_w_asympt} w_{\rm CW}(N,M,L) = \frac{a_0(N,A|_N)}{2(d+1)!(n-d-2)!\,a_0(M,A)} \left( \frac{e(M,A)}{a_0(M,A)} - \frac{e(N_0,L|_{N_0})}{a_0(N,A|_N)}\right).\end{equation}   

Moreover we can get a differential geometric expression for $w_{\rm CW}(N,M,L)$ as follows. Let $v$ be the holomorphic vector field generated by $\alpha$ on $M$ and let $\phi_v$ be the normalized potential of $v$ with respect to a fixed K\"ahler metric $\omega$ in the first Chern class of $A$; in other words $\phi_v$ is the unique solution of the system $$ \left\{ 
\begin{array}{l}
\int_M \phi_v \omega^n = 0 \\
\bar \partial \phi_v + i_v \omega = 0. 
\end{array}
\right.$$     
Since $N_0$ is $\alpha$-invariant, by Riemann-Roch theorem and \cite[proposition 3]{Don05} one has $a_0(M,L) = {\rm vol}(M,\omega)$, $e(M,L)=0$ and $e(N,A|_N) =\int_N \phi_v \frac{\omega^d}{d!}$, whence \begin{equation}\label{eq::CH_diff} w_{\rm CW} (N,M,L)= \frac{-1}{2(n-d-2)!\,{\rm vol}(M,\omega)} \int_{N_0} \phi_v \frac{\omega^d}{d!}, \end{equation} where the integral is over the cycle associated to the scheme $N_0$.

Clearly equation \eqref{eq::CH_diff} defines a linear functional on the space of the holomorphic vector fields (with zeros) on $M$ tangent to $N_0$.
\end{rem}

We conclude this section with the following 

\begin{lem}\label{lem::lAC_mor}
Given a finite collection of closed subschemes $(Y_j, \mathcal I_{Y_j})\subset X$, $1 \leq j \leq s$ of the same dimension $d$, let $Z\subset X$ be the sub-scheme cut out by $\mathcal I_Z = \mathcal I_{Y_1}^{m_1} \cap \dots \cap I_{Y_s}^{m_s}$ for some multiplicities $m_1,\dots,m_s > 0$. We have $$ \lambda_{\rm CW} (Z,X/B,L) = \bigotimes_{j=1}^s \lambda_{\rm CW} (Y_j,X/B,L)^{m_j^{n-d-1}}.$$
\end{lem}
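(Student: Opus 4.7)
The plan is to reduce the claim to identities among coefficients in the expansions \eqref{eq::rk_sub} and \eqref{eq::det_sub}. Since $\nu_0$ and $a_0$ depend only on the family $(X/B, L)$, inspection of definition \ref{def::CWline} shows that the lemma is equivalent to
\[ b_{0,1}^{Z} = \sum_{l=1}^s m_l^{n-d-1}\, b_{0,1}^{Y_l}, \qquad \rho_{0,1}^{Z} = \bigotimes_{l=1}^s \bigl(\rho_{0,1}^{Y_l}\bigr)^{m_l^{n-d-1}}, \]
the superscripts indicating which subscheme each coefficient refers to.

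The key comparison is between $\mathcal F_k := \mathcal O_X/\mathcal I_Z^k$ and $\mathcal G_k := \bigoplus_{l=1}^s \mathcal O_X/\mathcal I_{Y_l}^{km_l}$. From $\mathcal I_Z \subseteq \mathcal I_{Y_l}^{m_l}$ one obtains $\mathcal I_Z^k \subseteq \bigcap_l \mathcal I_{Y_l}^{km_l}$, and hence a canonical factorization $\mathcal F_k \twoheadrightarrow \mathcal O_X/\bigcap_l \mathcal I_{Y_l}^{km_l} \hookrightarrow \mathcal G_k$ which splits into two short exact sequences with kernel $K_k$ and cokernel $C_k$. I would then show that $K_k$ and $C_k$ have $\pi$-relative dimension at most $d-1$: at the generic point $\eta_l$ of any $Y_l$ the ideals $\mathcal I_{Y_i}$ with $i \neq l$ are trivial --- this uses the implicit hypothesis, needed for the stated formula, that the $Y_l$'s are distinct top-dimensional components of $Z$ so that $\dim(Y_i \cap Y_l) < d$ for $i \neq l$ --- and therefore $(\mathcal I_Z^k)_{\eta_l} = (\mathcal I_{Y_l}^{km_l})_{\eta_l} = (\bigcap_i \mathcal I_{Y_i}^{km_i})_{\eta_l}$, so both $K_k$ and $C_k$ are supported inside $\bigcup_{i \neq l}(Y_i \cap Y_l)$.

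Combining additivity of $\operatorname{rank} \pi_!$ and $\det\pi_!$ along short exact sequences with Knudsen--Mumford, any coherent sheaf of $\pi$-relative dimension $\leq d-1$ has zero coefficient of $h^d$ in the polynomial expansion of $\operatorname{rank} \pi_!(\cdot \otimes L^h)$ and trivial coefficient of $h^{d+1}$ in the Knudsen--Mumford expansion of $\det\pi_!(\cdot \otimes L^h)$. Consequently, for every $k \gg 0$ the coefficient of $h^d$ in $\operatorname{rank} \pi_!(\mathcal F_k \otimes L^h)$ matches that in $\sum_l \operatorname{rank} \pi_!(\mathcal O_X/\mathcal I_{Y_l}^{km_l} \otimes L^h)$, and likewise for the $h^{d+1}$-coefficient in $\det\pi_!$. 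Substituting $K = km_l$ in the joint expansions \eqref{eq::rk_sub} and \eqref{eq::det_sub} for each $(Y_l, X/B, L)$ and then extracting the coefficient of $k^{n-d-1}$ yields the two identities displayed above; plugging them back into definition \ref{def::CWline} gives the lemma. The main obstacle is precisely the support-dimension argument of the previous paragraph, which depends on the $Y_l$'s being pairwise in sufficiently general position --- an assumption consistent with the way the lemma is invoked via corollary \ref{cor::fut_exp_mult_intro}.
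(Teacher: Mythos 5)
Your proof is correct and rests on the same two mechanisms as the paper's: additivity of ${\rm rank}\,\pi_!$ and $\det\pi_!$ along short exact sequences whose error terms are supported in relative dimension $<d$ (hence invisible in the $h^{d}$, resp.\ $h^{d+1}$, coefficients), and the substitution $k\mapsto m_l k$ in the expansions \eqref{eq::rk_sub} and \eqref{eq::det_sub} to produce the exponent $m_l^{n-d-1}$ on the $(0,1)$ coefficients. The organization differs: the paper first proves additivity for $\mathcal I_{Y_1}\cap\mathcal I_{Y_2}$ with $m_1=m_2=1$ via the Mayer--Vietoris type sequence \eqref{eq::ex_seq_int}, then treats pure powers $\mathcal I_Y^m$ separately, and concludes by induction on $s$; you instead compare $\mathcal O_X/\mathcal I_Z^k$ with $\bigoplus_l\mathcal O_X/\mathcal I_{Y_l}^{km_l}$ in a single step, controlling the kernel $K_k$ and cokernel $C_k$ by localization at the generic points of the $Y_l$. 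Your packaging is in one respect more careful: the paper's exact sequence for the $k$-th powers is not literally obtained from \eqref{eq::ex_seq_int}, since for instance $(\mathcal I_{Y_1}\cap\mathcal I_{Y_2})^k$ and $\mathcal I_{Y_1}^k\cap\mathcal I_{Y_2}^k$ need not coincide; that discrepancy is again supported in dimension $<d$, and your $K_k$ absorbs it explicitly. Both arguments rely on the same unstated hypothesis that the $Y_j$ share no top--dimensional components, i.e.\ $\dim(Y_i\cap Y_j)<d$ for $i\neq j$ --- you correctly flag this, and it does hold in the applications (corollary \ref{cor::fut_exp_mult_intro}, where the centers are pairwise disjoint away from the central fiber).
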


\begin{proof}
We start proving the formula in the case $s=2$ and $m_1=m_2=1$ so that $\mathcal I_Z = \mathcal I_{Y_1} \cap \mathcal I_{Y_2}$. Consider the exact sequence 
\begin{equation}\label{eq::ex_seq_int}
0 \to \mathcal O_X / \mathcal I_{Y_1} \cap \mathcal I_{Y_2} \to \mathcal O_X / \mathcal I_{Y_1} \oplus \mathcal O_X / \mathcal I_{Y_2} \to \mathcal O_X / \mathcal I_{Y_1} + \mathcal I_{Y_2} \to 0,
\end{equation} 
where the third arrow takes the difference of sections of $\mathcal O_X / \mathcal I_{Y_1}$ and $\mathcal O_X / \mathcal I_{Y_2}$. Tensoring by $L^h$, for $h\gg 0$ we get $$ 0 \to L^h / \left(\mathcal I_{Y_1} \cap \mathcal I_{Y_2}\right)^k L^h \to L^h / \mathcal I_{Y_1}^k L^h \oplus L^h / \mathcal I_{Y_2}^k L^h \to L^h / \left( \mathcal I_{Y_1} + \mathcal I_{Y_2} \right)^k L^h \to 0, $$ whence $$ \pi_! \left( L^h / \left(\mathcal I_{Y_1} \cap \mathcal I_{Y_2}\right)^k L^h \right) = \pi_! \left( L^h / \mathcal I_{Y_1}^k L^h\right) + \pi_! \left(L^h / \mathcal I_{Y_2}^k L^h \right) - \pi_! \left( L^h / \left( \mathcal I_{Y_1} + \mathcal I_{Y_2} \right)^k L^h \right).$$ 
Since ${\rm supp}\left(\mathcal I_{Y_1} + \mathcal I_{Y_2}\right)$ has dimension less than $d$, we get \begin{eqnarray*} \det \pi_! \left( L^h / \mathcal I_{Z}^k L^h \right) &=& \det \pi_! \left( L^h / \mathcal I_{Y_1}^k L^h\right) \otimes \det \pi_! \left(L^h / \mathcal I_{Y_2}^k L^h \right) \otimes O(h^d) \\ &=& \bigotimes_{j=0}^{n-d} \rho_{0,j}(Y_1)^{h^{d+1}k^{n-d-j}} \otimes \bigotimes_{j=0}^{n-d} \rho_{0,j}(Y_2)^{h^{d+1}k^{n-d-j}} \otimes O(h^d) \\ &=& \bigotimes_{j=0}^{n-d} \left(\rho_{0,j}(Y_2) \otimes \rho_{0,j}(Y_2) \right)^{h^{d+1}k^{n-d-j}} \otimes O(h^d), \end{eqnarray*} 
and analogously $$ {\rm rank}\,\pi_! \left( L^h / \mathcal I_{Z}^k L^h \right) = \sum_{j=0}^{n-d} \left(b_{0,j}(Y_1)+b_{0,j}(Y_2)\right)h^dk^{n-d-j} + O(h^{d-1}),$$
whence  \begin{eqnarray*} \lambda_{\rm CW} (Z,X/B,L) &=& \left( \nu_0 ^{b_{0,j}(Y_1)+b_{0,j}(Y_2)} \otimes \left(\rho_{0,j}(Y_2) \otimes \rho_{0,j}(Y_2) \right)^{-a_0}\right)^{\frac{1}{a_0^2}} \\ &=& \left( \nu_0 ^{b_{0,j}(Y_1)} \otimes \rho_{0,j}(Y_a)^{-a_0}\right)^{\frac{1}{a_0^2}} \otimes \left( \nu_0 ^{b_{0,j}(Y_2)} \otimes \rho_{0,j}(Y_2)^{-a_0}\right)^{\frac{1}{a_0^2}} \\ &=& \lambda_{\rm CW} (Y_1,X/B,L) \otimes \lambda_{\rm CW} (Y_2,X/B,L).\end{eqnarray*}

Now let $\mathcal I_Z = \mathcal I_Y^m$ for some $m > 0$. By expansions \eqref{eq::rk_sub} and \eqref{eq::det_sub} we get $$ {\rm rank}\, \pi_! \left( L^h/\mathcal I_Z^k L^h \right) = \sum_{i=0}^d \sum_{j=0}^{n-d} b_{i,j}m^{n-d-j} h^{d-i} k^{n-d-j}$$ and $$\det \pi_!(L^{h}/\mathcal I_Z^k L^{h}) = \bigotimes_{i=0}^{d+1} \bigotimes_{j=0}^{n-d} \rho_{i,j}^{m^{n-d-j}h^{d+1-i}k^{n-d-j}}$$ respectively. Thus by definition of CW--line we get $$ \lambda_{\rm CW} (Z,X/B,L) = \left(\nu_0^{b_{0,1}m^{n-d-1}} \otimes \rho_{0,1}^{-m^{n-d-1}a_0} \right)^\frac{1}{a_0^2} = \lambda_{\rm CW}(Y,X/B,L)^{m^{n-d-1}}.$$

The general case follows easily by induction on $s$.
\end{proof}


\section{CM--line of blow-ups}\label{sec::CM_blup}

In this section we give an expression of the CM--line of a polarized flat family blown-up along a flat closed sub-scheme, in terms of the CM--line of the base family and the CW--line of the center of the blow-up.

Let $\pi: X \to B$ be a flat morphism of projective varieties with $B$ irreducible and relative dimension $\dim(X/B) = n$ and let $L$ be a relatively ample line bundle $X$. Moreover, let $i: Y \hookrightarrow X$ be the inclusion of a subscheme flat over $B$ (via $\pi$) with relative dimension $\dim(Y/B)=d<n-1$. 

Now let $\beta : \tilde X \to X$ be the blow-up of $X$ along $\mathcal I_Y$with exceptional (invertible) sheaf $\mathcal O_{\tilde X}(1)$. By hypothesis on $\pi$ and $Y$ we can conclude that $\tilde \pi = \pi \circ \beta$ is flat. We set $$ L_r = \beta^* L^r \otimes \mathcal O_{\tilde X}(1). $$ For $r$ sufficiently large $L_r$ is relatively ample. Moreover, for all $k\gg 0$, we have the identification $ \tilde \pi_! (L_r^k) = \pi_! (\mathcal I_Y^kL^{kr})$ of locally free sheaves on $B$. Thus, always for $k$ sufficiently large, we have the following exact sequence $$ 0 \to \mathcal I_Y^k L^{kr} \to L^{kr} \to L^{kr} / \mathcal I_Y^k L^{kr} \to 0,$$ whence \begin{equation}\label{eq::push_L_r^k}\tilde \pi_! (L_r^k) = \pi_!(L^{kr}) - \pi_! (L^{kr} /\mathcal I_Y^k L^{kr}). \end{equation}

\begin{thm}\label{thm::lCM_asym}
For $r$ sufficiently large we have the following asymptotic expansion 
$$ \lambda_{\rm CM}(\tilde X/B, L_r) = \lambda_{\rm CM}(X/B,L) \otimes \lambda_{\rm CW}(Y,X/B,L)^{\frac{1}{r^{n-d-1}}} \otimes O\left(\frac{1}{r^{n-d}}\right).$$
\end{thm}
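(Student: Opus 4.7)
The strategy is to read off the four numerical/line-bundle coefficients that enter Definition \ref{def::CMline} for the polarized family $(\tilde X/B, L_r)$ directly from equation \eqref{eq::push_L_r^k}. Taking ranks and determinants of both sides of that identity and substituting the polynomial expansions \eqref{eq::rk_av}, \eqref{eq::det_av}, \eqref{eq::rk_sub}, \eqref{eq::det_sub} with $h = kr$, one obtains explicit formulas in $k$ and $r$. The leading two terms in $k$ of $\operatorname{rank}\tilde\pi_!(L_r^k)$ are $a_0(r)\,k^n + a_1(r)\,k^{n-1} + \cdots$ with $a_0(r) = a_0 r^n - b_{0,0} r^d$ and $a_1(r) = a_1 r^{n-1} - b_{0,1} r^d - b_{1,0} r^{d-1}$, while the two top $\mathbb Q$-line bundle coefficients in $\det \tilde\pi_!(L_r^k)$ are $\nu_0(r) = \nu_0^{r^{n+1}}\otimes \rho_{0,0}^{-r^{d+1}}$ and $\nu_1(r) = \nu_1^{r^n} \otimes \rho_{1,0}^{-r^d} \otimes \rho_{0,1}^{-r^{d+1}}$.

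Substituting these into $\lambda_{\rm CM}(\tilde X/B, L_r) = \bigl(\nu_0(r)^{-a_1(r)} \otimes \nu_1(r)^{a_0(r)}\bigr)^{1/a_0(r)^2}$ and expanding the rational exponent appearing on each of the five $\mathbb Q$-line bundles $\nu_0, \nu_1, \rho_{0,0}, \rho_{1,0}, \rho_{0,1}$ as a Laurent series in $r$ produces the asymptotic formula. Two consequences of the hypothesis $d<n-1$ are essential: it gives the strict separation $n+d+1 < 2n$ in the hierarchy of monomials produced in the numerator, and it ensures $2d+1-2n < d-n$ so that the higher-order corrections coming from $1/a_0(r)^2 = (a_0^2 r^{2n})^{-1}\bigl(1 + O(r^{d-n})\bigr)$ do not contaminate the $r^{-(n-d-1)}$ coefficient. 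The $r^0$ contribution recombines into $\lambda_{\rm CM}(X/B, L)$ by Definition \ref{def::CMline}, and the $r^{-(n-d-1)}$ contribution isolates the exponents $b_{0,1}/a_0^2$ on $\nu_0$ and $-1/a_0$ on $\rho_{0,1}$, which by Definition \ref{def::CWline} assemble precisely into $\lambda_{\rm CW}(Y, X/B, L)$.

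The main obstacle is the bookkeeping of orders, not any deep geometry. One must carefully order the monomials $r^{2n}, r^{n+d+1}, r^{n+d}, r^{2d+1}, r^{2d}$ appearing in the numerator against $r^{2n}, r^{n+d}, r^{2d}$ in the denominator, and verify that every cross-term other than the two already identified produces a $\mathbb Q$-line bundle raised to a power which is $O(r^{d-n}) = O(1/r^{n-d})$. The definitions of $\lambda_{\rm CM}$ and $\lambda_{\rm CW}$ are tailored so that these two leading contributions recombine with no extra correction at order $1/r^{n-d-1}$, which is precisely the content of the claimed expansion.
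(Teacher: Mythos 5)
Your proposal is correct and follows essentially the same route as the paper: take ranks and determinants of the identity $\tilde\pi_!(L_r^k)=\pi_!(L^{kr})-\pi_!(L^{kr}/\mathcal I_Y^k L^{kr})$, substitute the four polynomial expansions with $h=kr$ to read off $a_0(r),a_1(r),\nu_0(r),\nu_1(r)$, and expand Definition \ref{def::CMline} in powers of $1/r$, with $d<n-1$ guaranteeing the separation of orders. The coefficients you identify ($a_0r^n-b_{0,0}r^d$, $a_1r^{n-1}-b_{0,1}r^d-b_{1,0}r^{d-1}$, $\nu_0^{r^{n+1}}\otimes\rho_{0,0}^{-r^{d+1}}$, $\nu_1^{r^n}\otimes\rho_{0,1}^{-r^{d+1}}\otimes\rho_{1,0}^{-r^d}$) match the paper's computation exactly, as does the isolation of $\nu_0^{b_{0,1}}\otimes\rho_{0,1}^{-a_0}$ at order $r^{n+d+1}$ in the numerator.
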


\begin{proof}
By equation \eqref{eq::push_L_r^k} and expansions \eqref{eq::det_av} and \eqref{eq::det_sub} we get 
\begin{eqnarray*} 
\det \tilde \pi_! (L_r^k) &=& \det \pi_! (L^{kr}) \otimes \left( \det \pi_! (L^{kr} / \mathcal I_Y^k L^{kr})\right)^{-1} \\ 
&=& \bigotimes_{i=0}^{n+1} \nu_i^{(kr)^{n+1-i}} \otimes \bigotimes_{i=0}^{d+1} \bigotimes_{j=0}^{n-d} \rho_{i,j}^{-(kr)^{d+1-i}k^{n-d-j}}  \\
&=& \bigotimes_{i=0}^{n+1} \nu_i^{(kr)^{n+1-i}} \otimes \bigotimes_{i=0}^{d+1} \bigotimes_{j=0}^{n-d} \rho_{i,j}^{-r^{d+1-i}k^{n+1-(i+j)}}  \\ 
&=& \left( \nu_0^{r^{n+1}} \otimes \rho_{0,0}^{-r^{d+1}} \right)^{k^{n+1}} \otimes \left( \nu_1^{r^n} \otimes \rho_{0,1}^{-r^{d+1}} \otimes \rho_{1,0}^{-r^d} \right)^{k^n} \otimes O(k^{n-1}),
\end{eqnarray*}
and analogously by expansions \eqref{eq::rk_av} and \eqref{eq::rk_sub} we have 
\begin{eqnarray*} 
{\rm rank}\, \tilde \pi_! (L_r^k) &=& {\rm rank}\, \pi_! (L^{kr}) - {\rm rank}\, \pi_! (L^{kr} / \mathcal I_Y^k L^{kr}) \\
&=& \sum_{j=0}^n a_j (kr)^{n-j} - \sum_{i=0}^d \sum_{j=0}^{n-d} b_{i,j} (kr)^{d-i}k^{n-d-j} \\
&=& \sum_{j=0}^n a_j (kr)^{n-j} - \sum_{i=0}^d \sum_{j=0}^{n-d} b_{i,j} r^{d-i}k^{n-(i+j)} \\
&=& (a_0r^n - b_{0,0}r^d)k^n + (a_1 r^{n-1} - b_{0,1}r^d - b_{1,0}r^{d-1} )k^{n-1} + O(k^{n-2}). 
\end{eqnarray*}
Thus by definition \ref{def::CMline} of CM--line we have 
\begin{eqnarray}\label{eq::lCM^a_0^2} 
\lambda_{\rm CM}(\tilde X/B,L_r)^{(a_0r^n - b_{0,0}r^d)^2} &=& \left( \nu_0^{r^{n+1}} \otimes \rho_{0,0}^{-r^{d+1}} \right)^{-a_1 r^{n-1} + b_{0,1}r^d + b_{1,0}r^{d-1}} \\ 
\nonumber && \otimes \left( \nu_1^{r^n} \otimes \rho_{0,1}^{-r^{d+1}} \otimes \rho_{1,0}^{-r^d} \right)^{a_0r^n - b_{0,0}r^d}\\
\nonumber &=& \left(\nu_0^{-a_1} \otimes \nu_1^{a_0}\right)^{r^{2n}} \otimes \left(\nu_0^{b_{0,1}} \otimes \rho_{0,1}^{-a_0} \right)^{r^{n+d+1}} \otimes O\left( r^{n+d} \right), 
\end{eqnarray} 
whence
$$ \lambda_{\rm CM}(\tilde X/B,L_r) = \left(\nu_0^{-a_1} \otimes \nu_1^{a_0}\right)^{\frac{1}{a_0^2}} \otimes \left(\nu_0^{b_{0,1}} \otimes \rho_{0,1}^{-a_0} \right)^{\frac{1}{a_0^2 r^{n-d-1}}} \otimes O\left( \frac{1}{r^{n-d}} \right)$$
and the thesis follows by definitions of CM--line and CW--line.
\end{proof}

\begin{rem}
Although Theorem \ref{thm::lCM_asym} gives only an asymptotic expansion of the line bundle $\lambda_{\rm CM}(\tilde X/B, L_r)$, is clear that from equation \eqref{eq::lCM^a_0^2} is not hard to get an exact polynomial expansion of $\lambda_{\rm CM}(\tilde X/B, L_r)^{(a_0r^n-b_{0,0}r^d)^2}$ in terms of data on $X$ and $Y$. We do not state here such a more complete result due to the lack (to the best of the author's knowledge) of geometric meaning of some other line bundles appearing in the formula. Only about the lower order term, we notice that it is the limit  as $k \to \infty$ of the CM--line $\lambda_{\rm CM}(Y_k,L|_{Y_k})^{\frac{1}{(b_{0,0}k^n)^2}}$ associated to the polarized sub-scheme $Y_k \subset X$ cut out by $\mathcal I_Y^k$. These aspects will be objects of further studies by the author.  
\end{rem}


\section{Proofs of theorems \ref{thm::no_cscK_intro} and \ref{thm::no_eK_intro} }

In this section we consider a special case of situation of previous sections. As above we have a flat family $\pi:X \to B$ endowed with a relative ample line bundle $L$, but now we suppose $B=\mathbb C$ and we are given a $\mathbb C^\times$-action on $X$, covering the natural one on $\mathbb C$, and a linearization on $L$. If the fiber $M=\pi^{-1}(t)$ at $t \neq 0$ is non-singular, we recover the definition of test configuration for the polarized manifold $(M,L|_M)$ originally due to Donaldson:

\begin{defn}[\cite{Don02}]
A test configuration (of exponent one) for the polarized manifold $(M,A)$ is a $\mathbb C^\times$-equivariant flat family $\pi:X \to \mathbb C$ endowed with a relatively ample $\mathbb C^\times$-linearized line bundle $L$ such that $(\pi^{-1}(t),L|_{\pi^{-1}(t)}) \simeq (M,A)$ for all $t \neq 0$. The Futaki invariant of the central fiber $(\pi^{-1}(0),L|_{\pi^{-1}(0)})$ is called the Futaki invariant of the test configuration and will be denoted (with a little abuse of notation) by $F(X,L)$, leaving the $\mathbb C^\times$--action understood.
\end{defn}

Now consider a $n$-dimensional projective manifold $M$ and fix $N_1,\dots,N_s \subset M$ pairwise disjoint submanifolds of dimension $d<n-1$. Let $$\beta: \tilde M = {\rm Bl}_{N_1\cup \dots\cup N_s}(M) \to M$$ be the blowup of $M$ along the union $N_1 \cup \dots \cup N_s$. Clearly $\tilde M$ is smooth, moreover, denoting by $E_j$ the exceptional divisor over $N_j$, and fixed an ample line bundle $A$ on $M$, we get a collection of polarized manifolds \begin{equation}\label{eq::pol_blup} \left( \tilde M, \beta^*A^r\otimes(-\sum_{j=1}^sm_jE_j) \right),\end{equation} for $r\gg 0$.

Fix a $\mathbb C^\times$-action  $\alpha: \mathbb C^\times \to {\rm Aut}(M)$ on $M$ and a linearization on $A$. Since each $N_j$ move under the action of $\alpha$, is natural to expect that these data give a test configuration for each of the polarized manifolds \eqref{eq::pol_blup}. To construct this test configuration we proceed as follows. Set $X=M \times \mathbb C$ and let $\pi:X \to \mathbb C$ be the projection on the second factor and $L={\rm pr}_1^*A$ be the pull back of $A$ via the projection of $X$ on $M$. Clearly $\pi$ is flat and $L$ is relatively ample. Now consider the (closure of) the trajectory of each $N_j$ under the action of $\alpha$: $$ Y_j = \overline{ \left\{ (p,t) \in M\times \mathbb C^\times \,|\, \alpha(t^{-1})(p) \in N_j \right\}} \subset X, $$ denote by $\mathcal I_{Y_j} \subset \mathcal O_X$ its ideal sheaf and let $Y \subset X$ be the sub-scheme cut out by the ideal sheaf $\mathcal I_Y = \mathcal I_{Y_1}^{m_1} \cap \dots \cap \mathcal I_{Y_s}^{m_s}$. The restriction of $\pi$ to $Y$ is surjective and flat. For $t\neq 0$ the fiber $\pi|_Y^{-1}(t)$ is nothing but the union of subschemes $(N_j,\mathcal I_{N_j}^{m_j})$ moved by $\alpha(t)$, whereas $\pi|_Y^{-1}(0)$ is the flat limit of such union of sub-schemes as $t \to 0$.

Finally consider the blow-up $\beta : \tilde X \to X$ of $X$ along $\mathcal I_Y$. Since the latter is $\mathbb C^\times$-invariant, $\tilde X$ has an induced $\mathbb C^\times$-action. Moreover we are in the situation of section \ref{sec::CM_blup}. The map $\tilde \pi = \beta \circ \pi$ is flat and equivariant and the line bundle $$ L_r = \beta^*L^r \otimes \mathcal O_{\tilde X}(1) $$ has an induced linearization. Since each slice $M\times \{t\}$ is transversal to ${\rm supp}\, \mathcal I_Y$ for $t \neq 0$, the fiber $\tilde \pi^{-1}(t)$ of $\tilde \pi$ at $t$ is the blow-up of $M$ at the transformed submanifold $\alpha(t) \left( N_1 \cup \dots \cup N_s \right)$, and  we have the isomorphism $$ \left( \tilde \pi^{-1} (t) , \left( \beta^*L^r \otimes (-E) \right)|_{\tilde \pi^{-1}(t)} \right) \simeq \left( \tilde M, \beta^*A^r\otimes(-\sum_{j=1}^sm_jE_j) \right).$$ Thus the family $\tilde \pi: \tilde X \to \mathbb C $ polarized with $L_r$ is a test configuration for the blown-up polarized manifold \eqref{eq::pol_blup} with $r \gg 0$. At this point we notice that in general $\tilde \pi^{-1}(0)$ is not the blow-up of $M$ at the limit sub-scheme $$ \lim_{t \to 0} \alpha(t) \left( N_1 \cup \dots \cup N_s \right) = \pi|_Y^{-1}(0). $$ This phenomenon is well discussed in \cite{Sto08} in case where $\dim(N_j)=0$. 

Now we apply results of sections \ref{sec::CM&CW} and \ref{sec::CM_blup} to get an asymptotic expansion of the Futaki invariant of the test configuration $\left(\tilde X / \mathbb C, L_r\right)$ we just constructed. In particular by theorem \ref{thm::lCM_asym} we get the following

\begin{cor}\label{cor::asym_F(r)} 
Let $F(\tilde X,L_r), F(M,A)$ be the Futaki invariants of the test configurations constructed above respectively for the manifolds $\left( \tilde M,\beta^* A^r \otimes \left( -\sum_{j=1}^s m_j E_j \right) \right)$ and $(M,A)$. For $r \gg 0$ we have $$ F(\tilde X,L_r) = F(M,A) + \frac{1}{r^{n-d-1}}\sum_{j=1}^s w_{\rm CW}(N_j,M,A)m_j^{n-d-1} + O\left(\frac{1}{r^{n-d}}\right).$$
\end{cor}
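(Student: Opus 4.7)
The plan is to apply Theorem \ref{thm::lCM_asym} to the test configuration $(\tilde X/\mathbb C, L_r)$ constructed above and then pass to $\mathbb C^\times$--weights on the fibers over $0\in\mathbb C$. First, since the $\mathbb C^\times$--action on $X=M\times\mathbb C$ is equivariant with respect to $\pi$, the subscheme $Y$ is invariant, and the linearizations on $L$ and on $\mathcal O_{\tilde X}(1)$ induce natural linearizations on $L_r$, $\lambda_{\rm CM}(X/\mathbb C,L)$, $\lambda_{\rm CM}(\tilde X/\mathbb C,L_r)$, and $\lambda_{\rm CW}(Y,X/\mathbb C,L)$ (independent of the choice on $L$). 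Thus Theorem \ref{thm::lCM_asym} becomes an identity of $\mathbb C^\times$--linearized $\mathbb Q$--line bundles on $\mathbb C$, and taking weights on the fiber over $t=0$ I get
\begin{equation*}
w\bigl(\lambda_{\rm CM}(\tilde X/\mathbb C, L_r)\bigr) = w\bigl(\lambda_{\rm CM}(X/\mathbb C, L)\bigr) + \frac{1}{r^{n-d-1}}\,w\bigl(\lambda_{\rm CW}(Y,X/\mathbb C,L)\bigr) + O\!\left(\frac{1}{r^{n-d}}\right).
\end{equation*}

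Next I identify each of these weights. By the Paul--Tian Theorem \ref{thm::PT_Fut} (or rather the interpretation of the weight of the CM--line as the generalized Futaki invariant of the central fiber), the weight of $\lambda_{\rm CM}(\tilde X/\mathbb C, L_r)$ over $0$ is precisely $F(\tilde X, L_r)$; similarly, since $X=M\times\mathbb C$ is a product configuration, its central fiber is $(M,A)$ and so the weight of $\lambda_{\rm CM}(X/\mathbb C, L)$ over $0$ is $F(M,A)$. To handle the CW term I invoke Lemma \ref{lem::lAC_mor} applied to $\mathcal I_Y = \mathcal I_{Y_1}^{m_1}\cap\cdots\cap \mathcal I_{Y_s}^{m_s}$, which yields
\begin{equation*}
\lambda_{\rm CW}(Y,X/\mathbb C,L) = \bigotimes_{j=1}^{s}\lambda_{\rm CW}(Y_j,X/\mathbb C,L)^{m_j^{n-d-1}},
\end{equation*}
as an identity of $\mathbb C^\times$--linearized $\mathbb Q$--line bundles. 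Taking weights on the fiber over $0$ and using the definition of the Chow weight of $N_j\subset(M,A)$ as $w_{\rm CW}(N_j,M,A) = w\bigl(\lambda_{\rm CW}(Y_j,X/\mathbb C,L)\bigr)_0$, I get
\begin{equation*}
w\bigl(\lambda_{\rm CW}(Y,X/\mathbb C,L)\bigr) = \sum_{j=1}^{s} m_j^{n-d-1}\,w_{\rm CW}(N_j,M,A).
\end{equation*}
Substituting these identifications back into the previous display gives the claimed expansion.

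The main obstacle is bookkeeping rather than a substantial difficulty: one must verify that the linearization on $\lambda_{\rm CM}(\tilde X/\mathbb C, L_r)$ coming from Theorem \ref{thm::lCM_asym} (via the Knudsen--Mumford expansion of $\det\tilde\pi_!(L_r^k)$) agrees with the linearization produced directly from the linearization on $L_r$; likewise for $\lambda_{\rm CW}$ in Lemma \ref{lem::lAC_mor}. Both are clear from the constructions of section \ref{sec::CM&CW}, since the $\nu_i$ and $\rho_{i,j}$ appear as graded pieces of a polynomial expansion in $k$ of the determinant of a functor applied to equivariant sheaves, and the isomorphisms of Theorem \ref{thm::lCM_asym} and Lemma \ref{lem::lAC_mor} come from $\mathbb C^\times$--equivariant exact sequences (the sequence $0\to \mathcal I_Y^k L^{kr}\to L^{kr}\to L^{kr}/\mathcal I_Y^k L^{kr}\to 0$ and the Mayer--Vietoris sequence \eqref{eq::ex_seq_int}). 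Finally, the $O(1/r^{n-d})$ term is genuinely a tensor product of $\mathbb Q$--line bundles raised to powers $1/r^i$ with $i\ge n-d$, so its weight over $t=0$ is $O(1/r^{n-d})$ as well.
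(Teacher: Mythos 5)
Your proposal is correct and follows essentially the same route as the paper: apply Theorem \ref{thm::lCM_asym} together with Lemma \ref{lem::lAC_mor} to expand $\lambda_{\rm CM}(\tilde X/\mathbb C,L_r)$, then take $\mathbb C^\times$--weights on the central fiber and identify them via Theorem \ref{thm::PT_Fut} and the definition of the Chow weight. Your extra remarks on the compatibility of linearizations only make explicit what the paper leaves implicit.
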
 

\begin{proof}
Since $\mathcal I_Z = \mathcal I_{Y_1}^{m_1} \cap \dots \cap \mathcal I_{Y_s}^{m_s}$ by theorem \ref{thm::lCM_asym} and lemma \ref{lem::lAC_mor} for $r \gg 0$ we get $$ \lambda_{\rm CM}(\tilde X/\mathbb C,L_r) = \lambda_{\rm CM}(X/\mathbb C,L) \otimes \bigotimes_{j=1}^s \lambda_{\rm CW}(Y_j,X/\mathbb C,L)^{\left( \frac{m_j}{r} \right)^{n-d-1}} \otimes O\left(\frac{1}{r^{n-d}}\right).$$ Thus the statement follows by theorem \ref{thm::PT_Fut} taking the weight of the $\mathbb C^\times$-action on the central fiber $\left( \lambda_{\rm CM}(\tilde X/\mathbb C,L_r) \right)_0$. 
\end{proof}

\begin{rem}
More generally we can start with an arbitrary test configuration $(X/\mathbb C,L)$ for $(M,A)$ with $\mathbb C^{\times}$--action $\alpha: \mathbb C^{\times} \to {\rm Aut}(X)$ and projection $\pi : X \to \mathbb C$. Embedded $N_j \subset \pi^{-1}(1)$ in the fiber over $t=1$, we set $$ Y_j = \overline{\left\{ p \in X \,|\, \alpha(\pi(p)^{-1})(p) \in N_j  \right\}} \subset X.$$ Repeating the argument above we get the proofs of corollaries \ref{cor::fut_exp_intro} and \ref{cor::fut_exp_mult_intro}.     
\end{rem}

Finally we are in position to give the following

\begin{proof}[proof of theorem \ref{thm::no_cscK_intro}]
Since $N$ is Chow--unstable, there is a one--parameter subgroup $\alpha: \mathbb C^\times \to {\rm Aut}(M)$ that linearizer on $A$ such that the Chow--weight $w_{\rm CW}(N,M,A)$ discussed in remark \ref{rem::CW_subvar} is positive. Following the construction illustrated above, with this one--parameter subgroup we can construct a test configuration $(\tilde X/\mathbb C,L_r)$ for $$ \left( \tilde M, \beta^*A^r\otimes(-\sum_{j=1}^sm_jE_j) \right)$$ with $r\gg 0$. Moreover by hypothesis $(M,A)$ admits a cscK metric, thus $F(M,A)=0$ vanish for all $\mathbb C^\times$--actions and by corollary \ref{cor::asym_F(r)} for $r \gg 0$ we get $$ F(\tilde X, L_r) = \frac{w_{\rm CW}(N,A)}{r^{n-d-1}} + O\left(\frac{1}{r^{n-d}}\right),$$ and the statement follows by theorem \ref{thm::cscK_imp_K-stab_intro}.   
\end{proof}

\begin{proof}[proof of theorem \ref{thm::no_eK_intro}]
The situation is similar to above. Since $N$ is relatively Chow--unstable, we can find a destabilizing one--parameter subgroup $\alpha : \mathbb C^\times \to Z_{{\rm Aut}(M)}(\tilde T)/\tilde T$, where $\tilde T \subset {\rm Aut}(M)_N$ is a fixed maximal torus of the stabilizer of $N$ in $M$ and $Z_{{\rm Aut}(M)}(\tilde T)$ is the identity component of the centralizer of $\tilde T$ in ${\rm Aut}(M)$. To fix an embedding $Z_{{\rm Aut}(M)}(\tilde T)/\tilde T \hookrightarrow {\rm Aut}(M)$, we proced as follows. Since ${\rm Lie}(Z_{{\rm Aut}(M)}(\tilde T))$ is a subalgebra of holomorphic vector field on $M$, we can consider the restriction to ${\rm Lie}(Z_{{\rm Aut}(M)}(\tilde T))$ of the Futaki-Mabuchi scalar product (see \cite{FutMab95}) of the class $c_1(A)$ and we define $G_{\tilde T}$ to be the subgroup generated by ${\rm Lie}(\tilde T)^\perp$. Thus we can consider $\alpha$ a one--parameter subgroup of ${\rm Aut}(M)$ commuting with $\tilde T$ and with generating vector field $\dot \alpha \in {\rm Lie}(\tilde T)^\perp$.

Moreover, by remark \ref{rem::CW_subvar} the Chow-weights of $N$ and its flat limit $N_0=\lim_{t \to 0}\alpha(t)\cdot N$ define holomorphic vector fields $\gamma$ and $\gamma_0$ on $M$ via the aforementioned scalar product. Thus by instability hypothesis and definition of $G_{\tilde T}$ we have \begin{equation}\label{eq::ortho_alpha_gamma} w_{\rm CH}(N,M,A) = \langle \dot\alpha, \gamma_0\rangle_{c_1(A)} >0 \qquad \mbox{and} \qquad \langle \dot\alpha, \gamma\rangle_{c_1(A)} =0. \qquad \end{equation}

Thanks to the invariance hypotesis on $N$ we have a privilegiate choice for the extremal vector field $\eta$ on $M$ of the class $c_1(A)$. With this choice, integrating the extremal field $r^{-2}\eta$ of the class $c_1(A^r)$, for each integer $r>0$ we get an extremal action $\chi^{(r)}: \mathbb C^\times \to \tilde T$ (see \cite{Sze06,FutMab95} for definition) on the manifold $M$. On the other hand, via the identification ${\rm Aut}(M)_N = {\rm Aut}(\tilde M)$ we can regard also the extremal action $\tilde \chi^{(r)} :\mathbb C^\times \to \tilde T$ of the class $c_1\left(A_r\right)$ on the blown up manifold $\tilde M$ as a $\mathbb C^\times$-action on $M$ (perhaps replacing $A_r=\beta^*A^r\otimes(-\sum_{j=1}^sm_jE_j)$ with some tensor power). 
The actions $\chi^{(r)}$ and $\tilde \chi^{(r)}$ induce $\mathbb C^\times$--actions on test configurations $X$ and $\tilde X$ constructed as above. 

Following Sz\'ekelyhidi \cite{Sze06} we have to compute the {\it corrected} Futaki invariant \begin{equation}\label{eq::fut_cor} F(\tilde X, L_r) - \langle \tilde \alpha_0, \tilde \chi^{(r)}_0 \rangle_{c_1(L_r|_{\tilde X_0})}\end{equation} where $\tilde \alpha_0$ and $\tilde \chi_0^{(r)}$ are the induced actions on the central fiber of $\tilde X$ respectively by the actions $\alpha$ and $\tilde \chi^{(r)}$ on $M$, and we denote the generalization to possibly singular and non--reduced schemes of the Futaki-Mabuchi product with the same notation. The key observation is that \begin{equation}\label{eq::asym_bil} \langle \tilde \alpha_0, \tilde \chi^{(r)}_0 \rangle_{c_1(L_r|_{\tilde X_0})} = \langle \alpha, \chi \rangle_{c_1(A)} + O\left(\frac{1}{r^{n-d}}\right)\end{equation} as $r \gg 0$, where $\langle \alpha, \chi \rangle_{c_1(A)} = \langle \dot \alpha , \eta \rangle = F(M,A)$ since $X$ is a product configuration. Assuming \eqref{eq::asym_bil} for the moment, together with the corollary \ref{cor::asym_F(r)} we get $$ F(\tilde X, L_r) - \langle \tilde \alpha_0, \tilde \chi^{(r)}_0 \rangle_{c_1(L_r|_{\tilde X_0})} =  \frac{w_{\rm CW}(N,A)}{r^{n-d-1}} + O\left(\frac{1}{r^{n-d}}\right),$$ and the statement follows by \cite[theorem 3.3.2]{Sze06}.

To prove \eqref{eq::asym_bil} consider the embedding $\iota_{L_r} : \tilde X \hookrightarrow \mathbb P^{N_r} \times \mathbb C$. Thanks to linearizations on $L_r$, $\tilde T$ and $\alpha$ act on $\mathbb P^{N_r} \times \mathbb C$ (in particular $\alpha$ acts via the natural action of $\mathbb C^\times$ on $\mathbb C$ and $\tilde T$ via the trivial action) and $\iota$ is both $\tilde T$ and $\alpha$--equivariant. Now let $\omega_{E}$ be the standard K\"ahler metric on $\mathbb C$ and let $\omega_{FS(N_r)}$ be a Fubini-Study metric on $\mathbb P^{N_r}$ such that $\alpha|_{S^1}$ acts by isometries. Let $\phi_\alpha^{(r)}$ be a K\"ahler potential for $\alpha$ with respect to $\omega_{FS(N_r)}$. 

For each integer $k>0$ we have \begin{eqnarray}\label{eq::lim_central} \int_{\tilde X_0} \left(\phi_\alpha^{(r)}\right)^k \frac{\omega_{FS(N_r)}^n}{n!} 
&=& \lim_{t \to 0} \int_{\tilde X_t} \left( \phi_\alpha^{(r)} + |z|^2 \right)^k \frac{\left(\omega_{FS(N_r)} + \omega_E\right)^n}{n!} \nonumber \\
&=& \lim_{t \to 0} \int_{\tilde X_t} \left( \phi_\alpha^{(r)} + |t|^2 \right)^k \frac{\omega_{FS(N_r)}^n}{n!} \nonumber \\
&=& \lim_{t \to 0} \int_{\tilde X_t} \left( \phi_\alpha^{(r)} \right)^k \frac{\omega_{FS(N_r)}^n}{n!}. \end{eqnarray}
Now, let $\omega$ be a K\"ahler metric on $M$ in the class $c_1(A)$ and let $\psi_\alpha$ be a K\"ahler potential for $\alpha$ with respect to $\omega$. For each $t \neq 0$ have \begin{equation}\label{eq::scal_asympt}\int_{\tilde X_t} \left( \phi_\alpha^{(r)} \right)^2 \frac{\omega_{FS(N_r)}^n}{n!} - \left( \int_{\tilde X_t} \phi_\alpha^{(r)} \frac{\omega_{FS(N_r)}^n}{n!} \right)^2 = r^{n+2} \left( \int_M \psi_\alpha^2 \frac{\omega^n}{n!} - \left( \int_M \psi_\alpha \frac{\omega^n}{n!} \right)^2 \right) + O_t(r^{d+2})\end{equation} 
Combinig \eqref{eq::lim_central} with \eqref{eq::scal_asympt} by \cite[Proposition 3]{Don05} and definition of the generalized Futaki-Mabuchi scalar product we get \begin{equation}\label{eq::fut_mab_asym} \langle \lambda_1, \lambda_2 \rangle_{c_1(L_r|_{X_0})} = r^2 \langle \dot \lambda_1, \dot \lambda_2 \rangle_{c_1(A)} + O\left( \frac{1}{r^{n-d-2}}\right),\end{equation} for each pair $\lambda_1, \lambda_2$ of one--parameter subgroups of ${\rm Aut}(M)$ generated by the vector fields $\dot \lambda_1, \dot \lambda_2 \in {\rm Lie}(\tilde T) + \mathbb C \dot \alpha$.
Since the extremal vector field (in the fixed torus) is by definition the dual of the Futaki invariant with respect to the Futaki-Mabuchi scalar product, denoting by $\eta_r$ the extremal field of the class $c_1(A_r)$, by \eqref{eq::fut_mab_asym} and corollary \ref{cor::asym_F(r)} for $r\gg 0$ we get $$ \eta_r = \frac{1}{r^2}\eta + \frac{1}{r^{n-d+1}}\gamma + O\left( \frac{1}{r^{n-d+2}} \right),$$ whence $$ \langle \tilde \alpha_0, \tilde \chi^{(r)}_0 \rangle_{c_1(L_r|_{\tilde X_0})} = \langle \dot \alpha, \eta \rangle_{c_1(A)} + \frac{1}{r^{n-d-1}} \langle \dot \alpha, \gamma \rangle_{c_1(A)} + O\left( \frac{1}{r^{n-d}} \right) $$ and \eqref{eq::asym_bil} follows thanks to \eqref{eq::ortho_alpha_gamma}.
\end{proof}


\end{document}